\numberwithin{equation}{subsection}
\theoremstyle{plain}
\newtheorem{theorem}{Theorem}[section]
\newtheorem{proposition}[theorem]{Proposition}
\theoremstyle{definition}
\theoremstyle{remark}
\newtheorem{remark}[theorem]{Remark}
\newtheorem*{acknowledgments}{Acknowledgments}
\newcommand\nc{\newcommand}
\nc\rnc{\renewcommand}
\nc\block[2]{\begin{#1}#2\end{#1}}
\nc\comm[1]{\ {\tt[* #1 *]}\ }
\nc\note[1]{{\small{{\tt [note:}\ {#1} {]}}}}
\nc\hide[1]{--}
\nc\nempty{\neq\emptyset}
\nc\xto[1]{{\overset{#1}{\longrightarrow}}}
\nc\yto[1]{{\underset{#1}{\longrightarrow}}}
\nc\xyto[2]{{\overset{#1}{\underset{#2}{\longrightarrow}}}}
\nc\opn{\operatorname}
\nc\LCS{\opn{LCS}}
\nc\Lie{\opn{Lie}}
\nc\id{\opn{id}}
\nc\sgn{\opn{sgn}}
\nc\Center{\opn{Center}}
\nc\ad{{\opn{ad}}}
\nc\adr{\opn{ad}^r}
\nc\coad{\opn{coad}}
\nc\coadr{\opn{coad}^r}
\nc\op{{\opn{op}}}
\nc\End{\opn{End}}
\nc\Span{\opn{Span}}
\nc\Spec{\opn{Spec}}
\nc\even{{\opn{even}}}
\nc\odd{{\opn{odd}}}
\nc\Gal{\opn{Gal}}
\nc\Aut{\opn{Aut}}
\nc\Ob{\opn{Ob}}
\nc\ob{\opn{ob}}
\nc\Mor{\opn{Mor}}
\nc\opint{\opn{int}}
\nc\Nbd{\opn{Nbd}}
\nc\ev{{\opn{ev}}}
\nc\coev{{\opn{coev}}}
\rnc\ev{d}
\rnc\coev{b}
\nc\ol{\overline}
\nc\half{{\frac12}}
\nc\halfof[1]{{\frac{#1}2}}
\nc\bb[2]{\biggl[\begin{matrix}{#1}\\{#2}\end{matrix}\biggr]}
\nc\projto{\underset{\text{proj}}{\longrightarrow}}
\nc\no[1]{}
\nc\ok{\comm{ok?}}
\nc\ho{{\hat\otimes }}
\nc\zzzcolon {\colon\thinspace}
\nc\zzzsharp {\sharp}
\nc\plim{\varprojlim}
\nc\np{\newpage}
\nc\modone {{\mathbf 1}}
\nc\g{{\mathfrak g}}
\nc\modk {{\mathbf k}}
\nc\modL {{\mathcal L}}
\nc\bbN{{\mathbb N}}
\nc\modQ {{\mathbb Q}}
\nc\modR {{\mathcal R}}
\nc\R{{\mathbb R}}
\nc\modZ {{\mathbb Z}}
\nc\ul{\underline}
\nc\simeqto{\overset{\simeq}{\rightarrow }}
\nc\trl{\triangleleft}
\nc\trr{\triangleright}
\nc\fig[2]{\begin{figure}
\Small
    \boxed{\tt #1}
    \caption{#2}
    \label{#1}
    \end{figure}}
\nc\FIGURE[2]{\begin{figure}
    \boxed{\tt fig: #1}
    \caption{#2}
    \label{fig:#1}
    \end{figure}}
\nc\xysquare[8]{\xymatrix{
    #1 \ar[r]#5 \ar[d]#6 & #2 \ar[d]#7 \\
    #3 \ar[r]#8          & #4
  }
  }
\nc\modC {{\mathcal C}}
\nc\modE {{\mathcal E}}
\nc\modV {{\mathcal V}}
\nc\Vect{\mathbf{Vect}}
\nc\Vectfin{\Vect^{\mathrm{fin}}}
\nc\Sets{\mathbf{Sets}}
\nc\Spaces{\mathbf{Spaces}}
\nc\Mod{\mathbf{Mod}}
\nc\Modfin{\Mod^{\mathrm{fin}}}
\nc\Cat{\mathbf{Cat}}
\nc\ct{\overset{\cong}{\to}}
\nc\bu{\bullet}
\nc\sq{\simeq}
\nc\blue[1]{{\textcolor[rgb]{0,0,.9}{#1}}}
\nc\bluen[1]{\blue{[[#1]]}}
\nc\bnote{\bluen}
\nc\red[1]{{\textcolor[rgb]{.9,0,0}{#1}}}
\nc\redn[1]{\red{[[#1]]}}
\nc\modP {\mathcal{P}}
\nc\LB{\mathbf{LB}}
\nc\modI {\mathcal{I}}
\nc\mt{\mapsto}
\nc\modB {\mathcal{B}}
\nc\gr{\opn{gr}}
\nc\modT {\mathcal{T}}
\nc\cv{{\opn{conv}}}
\nc\modH {\mathbf{H}}
\nc\modM {\mathcal{M}}
\nc\ci{\circ}
\nc\ti{\tilde}
\nc\ee[2]{\eta _{#1}\epsilon _{b#2}}
\nc\sh{\mathrm{sh}}
\nc\move[1]{\underset{#1}{\Longrightarrow}}
\nc\incl{\mathrm{incl.}}
\nc\modr {\mathbf{r}}
\nc\C{\mathbb{C}}
\nc\Id{\mathrm{Id}}
\nc\Hom{\opn{Hom}}
\rnc\xto{\xrightarrow}
\nc\cut[1]{{\color{red}[[#1]]}}
\nc\ModH{\Mod_H}
\nc\Piv{\mathcal{P}}
\nc\Rib{\opn{Rib}}
\nc\figu{\mathrm{fig}}
\nc\RigCat{\mathbf{RigCat}}
\nc\PivCat{\mathbf{PivCat}}
\nc\hhrule{\medskip\hrule\medskip}
\nc\XX[1]{(#1,#1^*,\ev_{#1},\coev_{#1},\ev_{#1^*},\coev_{#1^*})}
\nc\XXs[1]{(#1^*,#1,\ev_{#1^*},\coev_{#1^*},\ev_{#1},\coev_{#1})}
\nc\evv[1]{\ev_#1,\coev_#1,\ev_{#1^*},\coev_{#1^*}}
\nc\modp {\mathrm{p}}
\nc\Set{\mathbf{Set}}
\nc\Zp{\modZ _{\ge 0}}
\nc\modm {\mathbf{m}}
\nc\modn {\mathbf{n}}
\nc\modl {\mathbf{l}}
\nc\modf {\mathbf{f}}
\nc\mods {\mathbf{s}}
\nc\modd {\mathbf{d}}
\nc\modN {\mathcal{N}}
\nc\modW {\mathcal{W}}
\nc\hr{\medskip\hrule\medskip}
\nc\YD{\mathcal{YD}}
\nc\rYD{r\YD}
\nc\pYD{p\YD}
\nc\YDd{\YD^{\mathrm{d}}}
\nc\YDr{\YD^{\mathrm{r}}}
\nc\YDp{\YD^{\mathrm{p}}}
\nc\YDH{\YD_H}
\nc\rYDH{\rYD_H}
\nc\pYDH{\pYD_H}
\nc\YDHVp{(\YDH^\modV )^p}
\nc\YDHV{\YDH^\modV }
\nc\pYDHV{\pYDH^\modV }
\nc\rYDHV{\rYDH^\modV }
\nc\modD {\mathcal{D}}
\nc\X{\mathbf{X}}
\nc\bfv{\mathbf{v}}
\nc\ot{\otimes}
\newcommand{\Qh}{\mathbb{Q}[[h]]}
\begin{document}
\title[Ribbon Yetter--Drinfeld modules and tangle invariants]{Ribbon Yetter--Drinfeld modules\\and tangle invariants}

\date{April 6, 2022.}
\keywords{Hopf algebra, Yetter--Drinfeld module, monoidal category, ribbon category, tangle}
\subjclass[2020]{18M15, 16T05, 57K16, 57K10}
\author{Kazuo Habiro}
\author{Yuka Kotorii}

\address{Department of Mathematics, Kyoto University, Kitashirakawa-Oiwakecho, Sakyo-ku, Kyoto 606-8502, Japan}
\email{habiro@math.kyoto-u.ac.jp}

\address{Department of Mathematics, Graduate school of Science, Hiroshima University, 1-7-1 Kagamiyama Higashi-hiroshima City, Hiroshima 739-8521 Japan}
\address{Interdisciplinary Theoretical and Mathematical Sciences Program, RIKEN, 2-1, Hirosawa, Wako, Saitama 351-0198, Japan}
\email{kotorii@hiroshima-u.ac.jp}

\maketitle
\begin{abstract}
    We define notions of pivotal and ribbon objects in a monoidal category.  These constructions give pivotal or ribbon monoidal categories from a monoidal category which is not necessarily with duals.
    We apply this construction to the braided monoidal category of Yetter--Drinfeld modules over a Hopf algebra.  This gives rise to the notion of ribbon Yetter--Drinfeld modules over a Hopf algebra, which form ribbon categories.  This gives an invariant of tangles.
\end{abstract}

\tableofcontents


\section{Introduction}

Reshetikhin and Turaev \cite{RT} introduced the notion of ribbon Hopf algebra and showed that the category of finite-dimensional modules over a ribbon Hopf algebra has a ribbon category structure.
Since the category $\mathcal T$ of framed, oriented tangles is a free ribbon category generated by one object \cite{Shum,Turaev}, a ribbon category yields a functor from the tangle category $\mathcal T$ to the category of finite-dimensional vector spaces, and thus gives a functorial invariants of tangles.
Many quantum link invariants, such as the Jones polynomial, can be constructed in this way.

Yetter and Drinfeld \cite{Yetter,Drinfeld1} introduced the notion of Yetter--Drinfeld modules over a Hopf algebra.  It is both a module and a comodule over the Hopf algebra such that the action and the coaction satisfy a certain compatibility condition.
The category $\YD_H$ of Yetter--Drinfeld modules over $H$ has a structure of a braided category, and its full subcategory $\YD_H^{fd}$ of finite-dimensional Yetter--Drinfeld modules has a structure of a rigid (i.e., with left duals) braided category \cite{AG}.
This is close to, but different from, having a ribbon category and tangle invariants.
It is well known that for a finite-dimensional Hopf algebra $H$, the category $\YD_H$ of Yetter--Drinfeld modules over $H$ is equivalent to the category of $D(H)$-module.
Here the Drinfeld double $D(H)$ of $H$ is a quasi-triangular Hopf algebra with underlying space $H\otimes H^*$.
It is also well known that one can adjoin a ribbon element $v$ to a quasi-triangular Hopf algebra $H$ to obtain a ribbon Hopf algebra $H\oplus Hv$  \cite{RT}.
Thus, starting from a finite-dimensional Hopf algebra $H$, one obtains a ribbon category $D(H)\oplus D(H)v\text{-mod}^{fd}$ and the associated tangle invariant.

In this paper, we take a different approach of getting a ribbon category from Yetter--Drinfeld modules.
We define a notion of \emph{ribbon Yetter--Drinfeld module} over a Hopf algebra, which is a finite-dimensional Yetter--Drinfeld module $X$ over $H$ equipped with a linear automorphism $\gamma_X : X\to X$ that plays a role of right curl (see \eqref{e22}).
We prove the following result.

\begin{theorem}[Proposition \ref{r24} and Theorem \ref{r34}]
  Let $H$ be a Hopf algebra with invertible antipode over a field $k$.
  Then the category $\rYD_H\text{-mod}$ of ribbon Yetter--Drinfeld modules admits a
  structure of a ribbon category, and hence gives rise to tangle invariants.  
\end{theorem}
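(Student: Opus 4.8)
The plan is to deduce the statement from the general ``ribbon object'' machinery developed in Section~\ref{r24}'s neighbourhood, specialized to the braided monoidal category $\YDH$. Concretely, I would first treat the abstract construction, then carry out the Yetter--Drinfeld specialization, and finally invoke freeness of the tangle category to extract the invariants.

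First I would set up, for an arbitrary braided monoidal category $\mathcal{C}$, the category $\Rib(\mathcal{C})$ whose objects are tuples $\XX{X}$ exhibiting $X$ as a two-sided dualizable object (so that $X^*$ is simultaneously a left and a right dual of $X$ via the four structure maps $\ev_X,\coev_X,\ev_{X^*},\coev_{X^*}$), together with an automorphism $\gamma_X\colon X\to X$ playing the role of the right curl as in \eqref{e22}; morphisms are the morphisms of $\mathcal{C}$ commuting with all of this data. The content of Proposition~\ref{r24} is then to endow $\Rib(\mathcal{C})$ with a ribbon structure: the duality data give rigidity directly; the inherited braiding $c$ restricts once one checks that ribbon objects are closed under $\otimes$ (building the duality data on $X\otimes Y$ from those on $X$ and $Y$, and the curl $\gamma_{X\otimes Y}$ from $\gamma_X$, $\gamma_Y$ and the double braiding $c_{Y,X}c_{X,Y}$) and that the unit is a ribbon object with $\gamma_{\modone}=\id$. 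Taking the twist $\theta_X$ to be the endomorphism determined by $\gamma_X$, I would then verify the ribbon axioms: naturality of $\theta$, the balancing identity $\theta_{X\otimes Y}=(\theta_X\otimes\theta_Y)c_{Y,X}c_{X,Y}$, and the duality compatibility $(\theta_X)^*=\theta_{X^*}$, the last two being where the curl axioms of \eqref{e22} do their work.

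Next I would carry out the specialization $\mathcal{C}=\YDH$ (Theorem~\ref{r34}). The category $\YDH$ is braided by Yetter and Drinfeld \cite{Yetter,Drinfeld1}, so the only remaining point is to identify $\rYDH\text{-mod}$ with $\Rib(\YD_H^{fd})$. Here the hypothesis that the antipode $S$ is invertible is essential: while $\YD_H^{fd}$ has left duals by \cite{AG}, producing the full two-sided data $\XX{X}$ as morphisms in $\YDH$ --- in particular the right pairing maps $\ev_{X^*},\coev_{X^*}$ --- requires $S^{-1}$ to define compatible module and comodule structures on $X^*$. Granting this, a ribbon object in $\YD_H^{fd}$ is precisely a finite-dimensional Yetter--Drinfeld module equipped with a right curl $\gamma_X$, i.e.\ a ribbon Yetter--Drinfeld module, so that $\rYDH\text{-mod}=\Rib(\YD_H^{fd})$ inherits the ribbon structure from Proposition~\ref{r24}.

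Finally, since the category $\modT$ of framed oriented tangles is the free ribbon category on one object \cite{Shum,Turaev}, each ribbon Yetter--Drinfeld module $X$ determines a unique ribbon functor $\modT\to\rYDH\text{-mod}$ sending the generating object to $X$, and evaluating this functor on a tangle yields the desired invariant. I expect the main obstacle to be the abstract step: checking that the curl data assemble coherently under tensor products and duals so that all ribbon axioms hold at once --- equivalently, that $\gamma_X$ genuinely behaves as a twist rather than a merely formal self-map --- together with, on the Yetter--Drinfeld side, the explicit verification that the right duality maps built from $S^{-1}$ are morphisms of Yetter--Drinfeld modules.
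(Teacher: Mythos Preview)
Your overall architecture---build an abstract ``ribbon object'' category $\modM^r$ inside any braided monoidal $\modM$, show it is ribbon, then specialize to $\modM=\YDH$ and invoke freeness of $\modT$---matches the paper exactly. The gap is in what a ribbon object \emph{is}. You treat the automorphism $\gamma_X$ as additional data carried by the object, on top of the two-sided duality sextuple $\XX{X}$. In the paper it is not: a pivotal object is just the sextuple, and once you are in a braided category the right and left positive curls $c_X^R,c_X^L$ of \eqref{e23} are \emph{computed} from that sextuple and the braiding. The ribbon condition is then the \emph{property} $c_X^R=c_X^L$, not an extra piece of structure. (The $\gamma_X$ of \eqref{e22} you cite lives in a \emph{symmetric} pivotal category---it is the right curl in the ambient $\modV$, and it enters only later, in Proposition~\ref{r35}, to give an alternative characterization of ribbon Yetter--Drinfeld modules in terms of data in $\modV$ rather than in $\YDHV$.)

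This changes what you have to check. There is no need to build $\gamma_{X\otimes Y}$ from $\gamma_X,\gamma_Y$ and a double braiding, nor to verify twist axioms for an independently given $\theta$: the paper's proof of Proposition~\ref{r24} simply shows that the property $c^R=c^L$ is closed under tensor product (a short diagrammatic calculation), holds for the unit, and is preserved by dualization; since $\modM^p$ is already braided pivotal, $\modM^r$ is then ribbon by definition. Your plan to verify the balancing identity and $(\theta_X)^*=\theta_{X^*}$ is not wrong---Remark~\ref{r38} says the two definitions of ribbon category agree---but it is working harder than necessary and, as written, risks circularity because you have not said which axioms your extra datum $\gamma_X$ is assumed to satisfy. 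On the Yetter--Drinfeld side, note also that $\rYDH$ is \emph{defined} as $(\YDH)^r$, so there is no separate identification to make; the role of $S^{-1}$ is already absorbed into the braiding of $\YDH$, and the explicit check that the right duality maps are Yetter--Drinfeld morphisms appears only in the proof of the reformulation Proposition~\ref{r35}, not in the proof of Theorem~\ref{r34} itself.
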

Note that in this theorem we do not assume that $H$ is finite-dimensional.

The notions of Hopf algebras and Yetter--Drinfeld modules can be defined in any symmetric monoidal category.  In this paper, we work in this general setting.

Here we outline the rest of the paper.
In Section 2, we recall some notions such as braided monoidal categories.
In Section 3, we introduce the notion of pivotal objects and ribbon objects in a monoidal category, which does not necessarily have duals.
The category of pivotal objects in a braided category has a structure of a pivotal, braided category, and similarly
the category of ribbon objects in a braided category has a structure of a ribbon category.
In Section 4, we first recall the notion of Yetter--Drinfeld module over a Hopf algebra, and then we define a ribbon Yetter--Drinfeld module to be a ribbon objects in the braided category of Yetter--Drinfeld modules.
In Section 5, we give some examples.

\begin{acknowledgments}
K.H. is supported by JSPS KAKENHI Grant Number 18H01119.
Y.K. is supported by JSPS KAKENHI, Grant-in-Aid for Early-Career Scientists Grant Number 20K14322, and by RIKEN iTHEMS Program.
\end{acknowledgments}

\section{Preliminaries}

In this section, we recall the notions of (strict) monoidal categories, braided
monoidal categories, symmetric monoidal categories, pivotal categories
and ribbon categories.  See  \cite{Kassel,MacLane,Selinger} for more details.

\subsection{Monoidal categories}

A {\em monoidal category} is a category $\modM $ equipped with a functor
$\otimes \zzzcolon \modM \times \modM \rightarrow \modM $ which is associative and unital with respect to an
object $I$ up to natural isomorphisms satisfying the well-known
coherence conditions.  If the associativity and unitality strictly
hold, then $\modM $ is called a {\em strict monoidal category}.  In this
paper, we will mainly work with strict monoidal categories for
simplicity.  It is straightforward
to
generalize the results below to non-strict monoidal categories.

The opposite $\modM ^\op$ of a strict monoidal category $\modM $ has a strict
monoidal structure $\otimes ^\op$, where $X\otimes ^{\op}Y= Y\otimes X$ for objects $X$
and $Y$ in $\modM $ and $f\otimes ^{\op}g=g\otimes f$ for morphisms $f$ and $g$ in
$\modM $.

A {\em strict monoidal functor} $F\zzzcolon \modM \rightarrow \modN $ between strict monoidal
categories $\modM $ and $\modN $ is a functor $F\zzzcolon \modM \rightarrow \modN $ such that $F(I)=I$ and
$F\circ\otimes =\otimes \circ(F\times F)\zzzcolon \modM \times \modM \rightarrow \modN $.

We will often use the graphical notation or string diagrams for
morphisms in strict monoidal categories.
Each object may be depicted by a vertical string. A morphism $f\zzzcolon x\rightarrow y$
is depicted as in Figure \ref{fig001}.
\begin{figure}[t] 
$$
{\raisebox{-7.8mm}{\begin{overpic}[width=8.5mm]{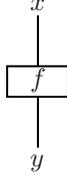}\put(9,-6){$y$}\put(9,53){$x$}\put(9,24){$f$}\end{overpic}}}
$$
\caption{String diagram.} \label{fig001}
\end{figure}
The composition is given by vertical pasting of diagrams, and the
tensor product is given by horizontal juxtaposition, see Figure \ref{fig002}.
\begin{figure}[t] 
$$
{\raisebox{-7.8mm}{\begin{overpic}[width=8.5mm]{morphism.eps}\put(9,-7){$z$}\put(9,54){$x$}\put(2,24){$g \circ f$}\end{overpic}}}  \hspace{.5cm} = \hspace{.5cm} 
{\raisebox{-7.8mm}{\begin{overpic}[width=8.5mm]{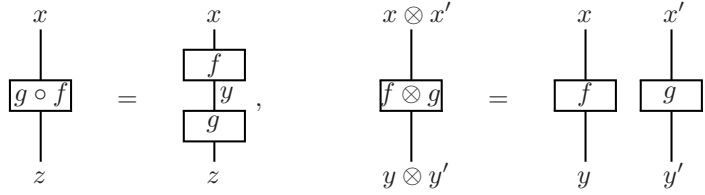}\put(9,-7){$z$}\put(9,54){$x$}\put(9,35){$f$}\put(14,25){$y$}\put(9,14){$g$}\end{overpic}}} \ , \hspace{1.5cm}
{\raisebox{-7.8mm}{\begin{overpic}[width=8.5mm]{morphism.eps}\put(1,-7){$y \otimes y'$}\put(1,54){$x \otimes x'$}\put(0,24){$f \otimes g$}\end{overpic}}} \hspace{.5cm} = \hspace{.5cm} 
{\raisebox{-7.8mm}{\begin{overpic}[width=8.5mm]{morphism.eps}\put(9,-7){$y$}\put(9,54){$x$}\put(9,24){$f$}\end{overpic}}} \hspace{.3cm}
{\raisebox{-7.8mm}{\begin{overpic}[width=8.5mm]{morphism.eps}\put(9,-7){$y'$}\put(9,54){$x'$}\put(9,25){$g$}\end{overpic}}}
$$
\caption{Composition and tensor product.} \label{fig002}
\end{figure}

\subsection{Duality}

Let $\modM $ be a strict monoidal category.  A {\em duality} in $\modM $ is a
quadruple $(X,X^*,\ev_X,\coev_X)$ of objects $X$ and $X^*$ and
morphisms
\begin{gather*}
  \ev_X= \ {\raisebox{-4.mm}{\begin{overpic}[width=10mm]{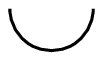}\put(-4,20){$X^*$}\put(23,20){$X$}\end{overpic}}} \ \ \zzzcolon X^*\otimes X\rightarrow I,\quad \coev_X= \ {\raisebox{.mm}{\begin{overpic}[width=10mm, angle=180]{bottom.eps}\put(-4,-9){$X$}\put(23,-9){$X^*$}\end{overpic}}} \ \ \ \zzzcolon I\rightarrow X\otimes X^*
\end{gather*}
in $\modM $ such that\\
\begin{gather*}
{\raisebox{-6.mm}{\begin{overpic}[width=15mm]{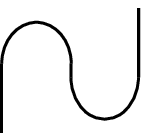}\put(38,42){$X$}\put(-4,-7){$X$}\end{overpic}}} \hspace{.5cm} =  \hspace{.5cm}
{\raisebox{-6.mm}{\begin{overpic}[width=.4mm]{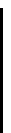}\put(-4,44){$X$}\put(-4,-7){$X$}\end{overpic}}}\hspace{.4cm} , \hspace{1.4cm}
{\raisebox{-6.mm}{\begin{overpic}[width=15mm]{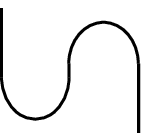}\put(36,-7){$X^*$}\put(-4,42){$X^*$}\end{overpic}}}  \hspace{.5cm} =  \hspace{.5cm}
{\raisebox{-6.mm}{\begin{overpic}[width=.4mm]{isotopy1.eps}\put(-4,44){$X^*$}\put(-4,-7){$X^*$}\end{overpic}}} \hspace{.4cm}  . \\
\end{gather*}
The object $X^*$ and also the triple $(X^*,\ev_X,\coev_X)$ are called
a {\em left dual} of $X$.

A strict monoidal category $\modM $ is called {\em $($left$)$ rigid} if each
object $X$ has a left dual.  Choosing a left dual of each object gives
rise to a functor
\begin{gather*}
  (-)^*\zzzcolon \modM ^\op \rightarrow \modM ,
\end{gather*}
which maps each object $X$ to its chosen left dual $X^*$, and each
morphism $f\zzzcolon X\rightarrow Y$ to its left dual $f^*\zzzcolon Y^*\rightarrow X^*$ defined by \\
\begin{gather}
  \label{e10}
         f^* = \ {\raisebox{-7.8mm}{\begin{overpic}[width=15mm]{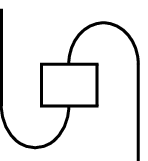}\put(37,-8){$X^*$}\put(-2,50){$Y^*$}\put(18,21.5){$f$}\end{overpic}}} \hspace{.4cm}  .
\end{gather}
\\

\subsection{Strict pivotal categories}

A {\em strict pivotal category} is a rigid strict monoidal category
$\modM $ with chosen left duals (and hence the left duality functor
$(-)^*\zzzcolon \modM ^\op \rightarrow \modM $) such that
\begin{enumerate}
\item $(-)^*\zzzcolon \modM ^\op\rightarrow \modM $ is a strict monoidal functor,
\item $\ev_I=\coev_I=\id_I$,
\item $\ev_{X\otimes Y}=\ev_Y(Y^*\otimes \ev_X\otimes Y)$ and
  $\coev_{X\otimes Y}=(X\otimes \coev_Y\otimes X^*)\coev_X$ for any objects $X$ and $Y$
  in $\modM $,
\item $(-)^{**}=\Id_\modM \zzzcolon \modM \rightarrow \modM $.
\end{enumerate}

A {\em strict pivotal functor}
  $F\zzzcolon \modM \rightarrow \modN $ between strict pivotal categories is a strict monoidal
  functor such that we have
\begin{gather*}
  F(X^*)=F(X)^*,\quad
  F(\ev_X)=\ev_{F(X)},\quad
  F(\coev_X)=\ev_{F(X)},
\end{gather*}
for all objects $X$ in $\modM $.   Such $F$ commutes
with the left duality functors, i.e., we have
\begin{gather*}
F((-)^*)=F(-)^* : \modM^\op\to \modN
\end{gather*}

\subsection{Braided categories}

A strict monoidal category $\modM $ is {\em braided} if it is equipped
with a natural family of isomorphisms, called the \emph{braidings}, \\
\begin{gather*}
  \psi _{X,Y}= \quad {\raisebox{-5mm}{\begin{overpic}[width=10mm]{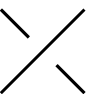}\put(-4,32){$X$}\put(25,32){$Y$}\put(-4,-8){$Y$}\put(23,-8){$X$}\end{overpic}}} \quad \zzzcolon X\otimes Y\overset{\simeq}{\longrightarrow} Y\otimes X \\
\end{gather*}
for all $X,Y\in \Ob(\modM )$, such that
\begin{gather*}
  \psi _{X\otimes Y,Z}=(\psi _{X,Z}\otimes Y)(X\otimes \psi _{Y,Z}),\quad
  \psi _{X,Y\otimes Z}=(Y\otimes \psi _{X,Z})(\psi _{X,Y}\otimes Z)
\end{gather*}
for $X,Y,Z\in \Ob(\modM )$.

A {\em strict symmetric monoidal category} is a strict braided 
monoidal category $\modM $ such that
$\psi _{Y,X}\psi _{X,Y}=\id_{X\otimes Y}\zzzcolon X\otimes Y\rightarrow X\otimes Y$ for all $X,Y\in \Ob(\modM )$.  The
braidings in this case are called the {\em symmetries}, which will be
denoted by \\
\begin{gather*}
  P_{X,Y}= \quad {\raisebox{-5mm}{\begin{overpic}[width=10mm]{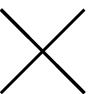}\put(-4,32){$X$}\put(25,32){$Y$}\put(-4,-8){$Y$}\put(23,-8){$X$}\end{overpic}}} \quad  \zzzcolon X\otimes Y\rightarrow Y\otimes X \\
\end{gather*}
instead of $\psi _{X,Y}$.

A strict monoidal functor $F\zzzcolon \modM \rightarrow \modN $ between braided strict monoidal
categories $\modM $ and $\modN $ is called a {\em strict braided monoidal
  functor} if we have
\begin{gather*}
  F(\psi _{X,Y})=\psi _{F(X),F(Y)}
\end{gather*}
for all objects $X$ and $Y$ in $\modM $.  If both $\modM $ and $\modN $ are
symmetric, then $F$ is called a {\em strict symmetric monoidal functor}.

\subsection{Strict braided pivotal categories and strict ribbon categories}

In a strict braided pivotal category $\modM $, there are natural
isomorphisms
\begin{gather*}
  c^R_X\zzzcolon X\rightarrow X,\quad
  c^L_X\zzzcolon X\rightarrow X
\end{gather*}
for objects $X$ in $\modM $ defined by
\medskip
\begin{gather}
  \label{e23}
  c^R_X = \ {\raisebox{-7.mm}{\begin{overpic}[width=10mm]{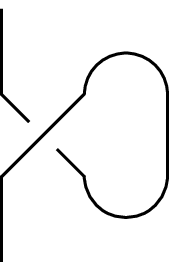}\put(-4,45){$X$}\put(-4,-9){$X$}\end{overpic}}} \ ,\quad\quad
    c^L_X = \ {\raisebox{-7.mm}{\begin{overpic}[width=10mm, angle=180]{rightpositivecurl.eps}\put(24,45){$X$}\put(24,-9){$X$}\end{overpic}}} \ \ . 
\end{gather} \\
We call $c^R_X$ (resp. $c^L_X$) the {\em right} (resp. {\em left})
{\em positive curl} for $X$.
The inverses of $c^R_X$ and $c^L_X$ are given by
\begin{gather*}
 {\raisebox{-7.mm}{\begin{overpic}[width=10mm]{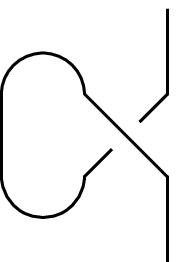}\put(24,45){$X$}\put(24,-9){$X$}\end{overpic}}}  \hspace{.8cm} and \hspace{.8cm} {\raisebox{-7.mm}{\begin{overpic}[width=10mm, angle=180]{rightpositivecurlinverse.eps}\put(-4,45){$X$}\put(-4,-9){$X$}\end{overpic}}} \ .
\end{gather*} \\

A {\em strict ribbon category} is a strict braided pivotal category
$\modM $ such that the right and left positive curls coincide, i.e.,
\begin{gather}
  \label{e11} c^R_X=c^L_X
\end{gather}
for any object $X$ in $\modM $.  In this case, we write $c_X=c^R_X=c^L_X$.

\begin{remark}
  \label{r38}
  In the literature, a ribbon (or tortile) category is usually defined
  to be a rigid braided monoidal category with a twist
  \cite{Shum, RT, Kassel}.  Here a
  {\em twist} for a strict braided monoidal category $\modM $ is a natural
  family of isomorphisms $\theta _X\zzzcolon X\rightarrow X$ such that we have 
  and
  \begin{gather*}
  \theta _I=\id_I,\quad
    \theta _{X\otimes Y} = \psi _{Y,X}\psi _{X,Y}(\theta _X\otimes \theta _Y),\quad
    \theta _{X^*} = (\theta _X)^*
  \end{gather*}
  for all objects $X$ and $Y$ in $\modM $.  The above definition of strict
  ribbon category is equivalent to the usual one.
\end{remark}

A {\em strict ribbon functor} $F\zzzcolon \modM \rightarrow \modN $ between two strict ribbon
categories $\modM $ and $\modN $ is just a {\em strict braided pivotal functor}, i.e.,
a strict pivotal functor $F\zzzcolon \modM \rightarrow \modN $ that is
also a strict braided monoidal functor.

\subsection{Strict symmetric pivotal categories}

For a strict symmetric pivotal category $\modM $, we have
\begin{gather*}
  c^L_X=(c^R_X)^{-1}
\end{gather*}
for any object $X$ in $\modM $.  Instead of $c^L$ and $c^R$, we use the
following notation\\

\begin{gather}
  \label{e22}
\gamma _X(=c^R_X)= \quad {\raisebox{-7.mm}{\begin{overpic}[width=10mm]{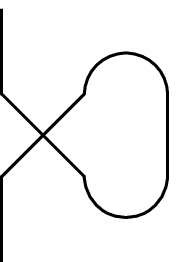}\put(-4,45){$X$}\put(-4,-7){$X$}\end{overpic}}}\quad,\qquad
\gamma _X^{-1}(=c^L_X)= \quad {\raisebox{-7.mm}{\begin{overpic}[width=10mm, angle=180]{rightcurl.eps}\put(25,45){$X$}\put(25,-9){$X$}\end{overpic}}}\quad.
\end{gather}
\vskip1em
We call $\gamma _X$ (resp. $\gamma _X^{-1}$) the {\em right} (resp. {\em left})
{\em curl} for $X$.
The $\gamma _X\zzzcolon X\rightarrow X$ forms a monoidal natural isomorphism, i.e., we have
$\gamma _I=\id_I$ and
\begin{gather*}
  \gamma _{X\otimes Y}=\gamma _X\otimes \gamma _Y
\end{gather*}
for any object $X$ and $Y$ in $\modM $.
We also have
\begin{gather*}
  \gamma _{X^*}= (\gamma _X^{-1})^*.
\end{gather*}

\section{Pivotal objects and ribbon objects}

In this section, we introduce the notion of pivotal objects in a monoidal category.
Pivotal objects in a monoidal category $\modC$ form a pivotal category even if the $\modC$ is not rigid.
Then we define a ribbon object in a braided monoidal category, which is a pivotal object such that the associated left and right positive curls coincide.
Ribbon objects in a braided category form a ribbon category.

\subsection{Pivotal objects}

Let $\modM $ be a strict monoidal category.  A {\em pivotal object} in
$\modM $ is a sextuple $\XX{X}$ consisting of objects $X$ and $X^*$ and
morphisms
\begin{gather*}
\ev_X= \ {\raisebox{-2.mm}{\begin{overpic}[width=10mm]{bottom.eps}\put(-4,18){$X^*$}\put(23,18){$X$}\end{overpic}}} \ \ ,\quad
\coev_X= \ {\raisebox{.mm}{\begin{overpic}[width=10mm, angle=180]{bottom.eps}\put(-4,-10){$X$}\put(23,-10){$X^*$}\end{overpic}}} \ \ \ , \quad
\ev_{X^*}= \ {\raisebox{-2.mm}{\begin{overpic}[width=10mm]{bottom.eps}\put(-4,18){$X$}\put(23,18){$X^*$}\end{overpic}}} \ \ ,\quad
\coev_{X^*}= \ {\raisebox{.mm}{\begin{overpic}[width=10mm, angle=180]{bottom.eps}\put(-4,-10){$X^*$}\put(23,-10){$X$}\end{overpic}}}
\end{gather*}
such that both $(X,X^*,\ev_{X},\coev_{X})$ and
$(X^*,X,\ev_{X^*},\coev_{X^*})$ are dualities in $\modM $.  For simplicity
of notation, we denote it by
\begin{gather*}
  X^p=\XX{X}.
\end{gather*}

Let $X^p$ and $Y^p$ be pivotal objects in $\modM $ and let $f\zzzcolon X\rightarrow Y$ be a
morphism in $\modM $.  The left dual $f^*\zzzcolon Y^*\rightarrow X^*$ of $f$ is defined by
\eqref{e10}.  By using the dualities $(X^*,X,\ev_{X^*},\coev_{X^*})$
and $(Y^*,Y,\ev_{Y^*},\coev_{Y^*})$, we can further obtain \\
\begin{gather} 
  \label{e30}
  f^{**}=(f^*)^*= \ {\raisebox{-10.mm}{\begin{overpic}[width=20mm]{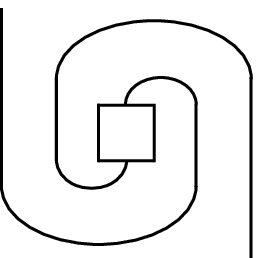}\put(53,-8){$Y$}\put(-3,60){$X$}\put(25,27){$f$}\end{overpic}}} \ \ . 
\end{gather} \\
A morphism $f\zzzcolon X^p\rightarrow Y^p$ between pivotal objects $X^p$ and $Y^p$ is
defined to be a morphism $f\zzzcolon X\rightarrow Y$ satisfying $f=f^{**}$.
It is easy to check that pivotal objects in $\modM $ and morphisms between
them form a category, which is denoted by $\modM ^p$.

\begin{remark}
  \label{r22}
  Our definition of pivotal objects is similar to the notion of
  ``pivotal objects'' in a rigid monoidal category defined by Shimizu
  \cite{Shimizu}.  If $\modM $ is rigid, then pivotal objects and the
  category $\modM ^p$ may be considered as a strict version of ``pivotal
  objects'' and the ``pivotal cover'' of $\modM $ defined in
  \cite{Shimizu}.
\end{remark}

\begin{proposition}
The category $\modM ^p$ has a structure of a strict pivotal category as
follows.  The unit object in $\modM ^p$ is
$I^p:=(I,I,\id_I,\id_I,\id_I,\id_I)$.  The tensor product of
$X^p,Y^p\in \modM ^p$ is given by
\begin{gather*}
  X^p\otimes Y^p = (X\otimes Y,Y^*\otimes X^*,\ev_{X\otimes Y},\coev_{X\otimes Y},\ev_{Y^*\otimes X^*},\coev_{Y^*\otimes X^*}),
\end{gather*}
where
\begin{gather*}
\ev_{X\otimes Y}= \ {\raisebox{-2.mm}{\begin{overpic}[width=10mm]{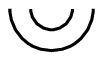}\put(-6,18){\tiny $Y^* X^*$}\put(19,18){\tiny $X$ $Y$}\end{overpic}}} \ \ ,\quad
\coev_{X\otimes Y}= \ {\raisebox{.mm}{\begin{overpic}[width=10mm, angle=180]{doublebottom.eps}\put(-4,-7){\tiny $X$ $Y$}\put(18,-7){\tiny $Y^* X^*$}\end{overpic}}} \hspace{.4cm} , \quad
\ev_{Y^*\otimes X^*}= \ {\raisebox{-2.mm}{\begin{overpic}[width=10mm]{doublebottom.eps}\put(-4,18){\tiny $X$ $Y$}\put(17,18){\tiny $Y^* X^*$}\end{overpic}}} \  \ ,\quad
\coev_{Y^*\otimes X^*}= \ {\raisebox{.mm}{\begin{overpic}[width=10mm, angle=180]{doublebottom.eps}\put(-5,-7){\tiny $Y^* X^*$}\put(18,-7){\tiny $X$ $Y$}\end{overpic}}}\ \ .
\end{gather*}
The tensor product of morphisms in $\modM ^p$ is the tensor product in
$\modM $.  The left dual $(X^p)^*$ of $X^p\in \modM ^p$ is given by
\begin{gather*}
  (X^p)^*=(X^*,X,\ev_{X^*},\coev_{X^*},\ev_X,\coev_X).
\end{gather*}
The evaluation and coevaluation morphisms for $X^p\in \modM ^p$ are given by
\begin{gather*}
  \ev_{X^p}=\ev_X,\quad
  \coev_{X^p}=\coev_X,\quad
  \ev_{(X^p)^*}=\ev_{X^*},\quad
  \coev_{(X^p)^*}=\coev_{X^*}.\quad
\end{gather*}
It follows that the dual $f^*\zzzcolon (Y^p)^*\rightarrow (X^p)^*$ of any morphism
$f\zzzcolon X^p\rightarrow Y^p$ in $\modM ^p$ is the same as the dual $f^*\zzzcolon Y^*\rightarrow X^*$ of
$f\zzzcolon X\rightarrow Y$ in $\modM $.  
\end{proposition}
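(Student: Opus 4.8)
The plan is to verify directly that the data listed in the statement satisfies the definition of a strict pivotal category from Section 2: first that $\modM^p$ with the given unit and tensor product is a strict monoidal category, then that it is rigid with $(X^p)^*$ as the chosen left dual, and finally that the four pivotal axioms (1)--(4) hold. The governing principle throughout is that composition and tensor product of morphisms in $\modM^p$ are computed exactly as in $\modM$; consequently any equality of morphisms that holds in $\modM$ holds in $\modM^p$, and essentially every verification reduces to an isotopy of string diagrams built from the snake identities of the two dualities packaged in each pivotal object.

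First I would check that the tensor product of two pivotal objects is again a pivotal object. Unwinding the graphical definitions, the nested maps are $\ev_{X\otimes Y}=\ev_Y(Y^*\otimes \ev_X\otimes Y)$ and $\coev_{X\otimes Y}=(X\otimes \coev_Y\otimes X^*)\coev_X$, and similarly for the starred pair; I would then establish the two snake identities exhibiting $(X\otimes Y,\,Y^*\otimes X^*,\,\ev_{X\otimes Y},\,\coev_{X\otimes Y})$ as a duality, and likewise for $(Y^*\otimes X^*,\,X\otimes Y,\,\ev_{Y^*\otimes X^*},\,\coev_{Y^*\otimes X^*})$, by sliding the inner cup/cap past the outer one and applying the snake identities for $X$ and for $Y$ separately. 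Strictness of the associativity and unit constraints follows because the two bracketings of $X^p\otimes Y^p\otimes Z^p$ both have underlying object $X\otimes Y\otimes Z$ and dual object $Z^*\otimes Y^*\otimes X^*$ (using strictness of $\modM$ together with $I^*=I$), while the nested $(\mathrm{co})$evaluations assembled in either order agree as diagrams by functoriality of $\otimes$; for the unit one uses $\ev_I=\coev_I=\id_I$ to collapse the outermost cup/cap.

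Next I would treat the duality functor and the morphisms. Since the left dual of $f\zzzcolon X\to Y$ in $\modM^p$ is computed by the very formula \eqref{e10} from $\ev_{X^p}=\ev_X$ and $\coev_{X^p}=\coev_X$, the two notions of $f^*$ coincide, which is exactly the final assertion of the statement; the functoriality $(gf)^*=f^*g^*$ and the contravariant monoidality $(f\otimes g)^*=g^*\otimes f^*$ are then the standard diagrammatic identities for left duals, proved graphically in $\modM$. From $(f^*)^*=f^{**}$ and $f=f^{**}$ one obtains $(f^*)^{**}=f^*$, so $(-)^*$ carries pivotal morphisms to pivotal morphisms; from the monoidality identity one obtains $(f\otimes g)^{**}=f^{**}\otimes g^{**}=f\otimes g$, so the tensor product of pivotal morphisms is again pivotal and $\otimes$ is well defined on $\modM^p$. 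Rigidity is then immediate, the snake identities for $\ev_X,\coev_X$ holding in $\modM$ and hence in $\modM^p$. The remaining axioms fall out: axiom (2) holds by the definition of $I^p$; axiom (3) is precisely the defining formula for $\ev_{X\otimes Y}$ and $\coev_{X\otimes Y}$; axiom (4) reads $((X^p)^*)^*=X^p$ on objects and is the defining condition $f=f^{**}$ of a pivotal morphism on morphisms; and axiom (1) is the bookkeeping identity $(Y^p\otimes X^p)^*=(X^p)^*\otimes (Y^p)^*$ together with the matching of their $(\mathrm{co})$evaluations.

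The hard part will not be any single axiom but the recurring need to confirm that the \emph{structural} morphisms of the candidate pivotal structure --- the nested $(\mathrm{co})$evaluations of a tensor product and the maps $\ev_{X^p},\coev_{X^p}$ exhibiting the left dual --- are genuinely morphisms \emph{in} $\modM^p$, i.e.\ satisfy $h=h^{**}$ for the double-dual operation we are simultaneously defining. This is the one place where the argument is not a formal consequence of working inside $\modM$: one must write out $h^{**}$ via \eqref{e30}, using all four $(\mathrm{co})$evaluations of the relevant pivotal objects, and verify by the snake identities that the extra caps and cups introduced by the double dual cancel. Keeping the orientations and the left/right nesting of these maps straight is the real bookkeeping burden; once that is settled, the remaining coherence and strictness claims are routine diagram chases.
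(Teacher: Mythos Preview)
Your proposal is correct and follows exactly the approach of the paper, which simply records that ``it is straightforward to check that $\modM^p$ is a strict pivotal category'' and leaves all details to the reader. Your outline is a faithful unpacking of that omitted verification, and your flagging of the need to check $h=h^{**}$ for the structural morphisms is apt (and is indeed routine via the snake identities, as you indicate).
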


\begin{proof}
It is straightforward to check that $\modM ^p$ is a
strict pivotal category. 
\end{proof}

We call $\modM ^p$ the {\em pivotalization} of
$\modM $.

There is an obvious ``forgetful'' functor $U\zzzcolon \modM ^p\rightarrow \modM $ which sends each
$X^p\in \modM ^p$ to $X\in \modM $ and each morphism $f$ to itself.

\subsection{Ribbon objects in braided categories}

\begin{proposition}
Let $\modM$ be a strict braided monoidal category.  
Then $\modM ^p$ is a strict braided pivotal category 
and the forgetful functor $U\zzzcolon \modM ^p\rightarrow \modM $ is a strict
braided monoidal functor.
\end{proposition}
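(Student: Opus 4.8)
The plan is to equip $\modM^p$ with the braiding inherited from $\modM$. For pivotal objects $X^p,Y^p\in\modM^p$ the underlying objects of $X^p\ot Y^p$ and $Y^p\ot X^p$ are $X\ot Y$ and $Y\ot X$, so I set $\psi_{X^p,Y^p}:=\psi_{X,Y}\zzzcolon X\ot Y\to Y\ot X$, the braiding of $\modM$. To make this legitimate I must check that $\psi_{X,Y}$ is a morphism in $\modM^p$, that the resulting family is a natural isomorphism, and that it satisfies the two braiding axioms. Once these are in place, the statement that $U$ is a strict braided monoidal functor is immediate: $U$ is already strict monoidal by the pivotalization proposition, and $U(\psi_{X^p,Y^p})=\psi_{X,Y}=\psi_{U(X^p),U(Y^p)}$ by construction.

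The only nontrivial point is that $\psi_{X,Y}$ satisfies the defining condition $\psi_{X,Y}=\psi_{X,Y}^{**}$ for a morphism $X^p\ot Y^p\to Y^p\ot X^p$ in $\modM^p$, where the double dual is formed from the pivotal structures on $X^p\ot Y^p$ (whose dual is $Y^*\ot X^*$) and on $Y^p\ot X^p$ (whose dual is $X^*\ot Y^*$) given in the previous proposition. I would compute $\psi_{X,Y}^{**}$ diagrammatically from \eqref{e30}, feeding in the nested (co)evaluations $\ev_{X\ot Y},\coev_{X\ot Y}$ and $\ev_{Y^*\ot X^*},\coev_{Y^*\ot X^*}$ that define the two tensor-product dualities. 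Applying the first dualization, which uses $\ev_{X\ot Y},\coev_{X\ot Y}$ together with the corresponding maps for $Y\ot X$, carries the $X,Y$-crossing to a crossing of the dual strands $X^*,Y^*$, i.e.\ a braiding morphism between the dual objects; applying the second dualization, which uses $\ev_{Y^*\ot X^*},\coev_{Y^*\ot X^*}$ and its counterpart, then returns this to the original $X,Y$-crossing. Each step is carried out by sliding the crossing across the caps and cups using naturality of $\psi$ and the zig-zag identities of the four dualities of $X^p$ and $Y^p$. This verification, showing that dualization carries crossings to crossings so that the two dualizations cancel and $\psi_{X,Y}^{**}=\psi_{X,Y}$, is where the real work lies.

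The remaining axioms are formal. Naturality of $\psi_{-,-}$ in $\modM^p$ is inherited from $\modM$, since a morphism in $\modM^p$ is in particular a morphism in $\modM$ and the relevant naturality squares already commute there. The two hexagon identities $\psi_{X\ot Y,Z}=(\psi_{X,Z}\ot Y)(X\ot\psi_{Y,Z})$ and $\psi_{X,Y\ot Z}=(Y\ot\psi_{X,Z})(\psi_{X,Y}\ot Z)$ hold in $\modM^p$ because the tensor products of objects and of morphisms in $\modM^p$ carry exactly the underlying data of $\modM$, and the braidings are literally those of $\modM$, where the identities hold. Finally $\psi_{X^p,Y^p}$ is invertible with inverse $\psi_{Y,X}^{-1}$, which is again a morphism in $\modM^p$ by the same double-dual argument applied to $\psi^{-1}$. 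Hence $\modM^p$ is a strict braided pivotal category and $U$ is a strict braided monoidal functor.
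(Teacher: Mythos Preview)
Your approach is correct and is essentially the same as the paper's: define $\psi_{X^p,Y^p}:=\psi_{X,Y}$ and check that this makes $\modM^p$ braided. The paper's proof is extremely terse and simply asserts the result, leaving implicit the verification you single out as the crux, namely that $\psi_{X,Y}$ is actually a morphism in $\modM^p$ (i.e.\ $\psi_{X,Y}^{**}=\psi_{X,Y}$); your sketch of sliding the crossing through the nested caps and cups via naturality of $\psi$ and the zig-zag identities is the standard way to see this, and the rest of your argument (naturality, hexagon, invertibility, and the statement about $U$) is routine as you say.
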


\begin{proof}
A braiding $\psi _{X^p,Y^p}\zzzcolon X^p \otimes Y^p \rightarrow Y^p \otimes X^p$ between pivotal objects $X^p$ and $Y^p$ is defined to be a morphism $\psi _{X,Y} \zzzcolon X \otimes Y \rightarrow Y \otimes X$. Then, $\modM ^p$ is a strict braided pivotal category.
Moreover, the functor $U\zzzcolon \modM ^p\rightarrow \modM $ is naturally a strict
braided monoidal functor.
\end{proof}

A pivotal object $X^p$ in $\modM $ is called a {\em ribbon object} in $\modM $
if the following ribbon condition is satisfied.
\begin{gather}
  \label{e-ribbon}
    {\raisebox{-7.mm}{\begin{overpic}[width=10mm, angle=180]{rightpositivecurl.eps}\put(24,45){$X$}\put(24,-9){$X$}\end{overpic}}} \quad
= \quad  {\raisebox{-7.mm}{\begin{overpic}[width=10mm]{rightpositivecurl.eps}\put(-4,45){$X$}\put(-4,-9){$X$}\end{overpic}}} \quad \zzzcolon X\rightarrow X.
\end{gather} \\

Let $\modM ^r$ denote the full subcategory of $\modM ^p$ whose objects
are the ribbon objects in $\modM $.

\begin{proposition}
  \label{r24}
  Let $\modM $ be a strict braided monoidal category.  Then $\modM ^r$ is a 
  strict braided pivotal subcategory of $\modM ^p$ and a strict ribbon
  category.
\end{proposition}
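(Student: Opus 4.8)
The plan is to reduce the proposition to three closure properties of the class of ribbon objects and then to observe that the ribbon axiom holds on $\modM^r$ by construction. Since $\modM^r$ is by definition a \emph{full} subcategory of the strict braided pivotal category $\modM^p$ (the preceding proposition), to exhibit it as a strict braided pivotal subcategory it suffices to check that ribbon objects are closed under the monoidal operations: that $I^p$ is a ribbon object, that $X^p\otimes Y^p$ is a ribbon object whenever $X^p$ and $Y^p$ are, and that $(X^p)^*$ is a ribbon object whenever $X^p$ is. Once this is known, the braiding, evaluations and coevaluations of $\modM^p$ restrict to $\modM^r$ and all coherence identities are inherited from $\modM^p$.

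The key ingredient I would establish is a list of identities for the positive curls, valid in any strict braided pivotal category and proved by isotopy of the underlying string (framed tangle) diagrams. Writing $\beta_{X,Y}=\psi_{Y,X}\psi_{X,Y}$, the first is that both curls are twists with respect to the \emph{same} double braiding,
\[
  c^R_{X\otimes Y}=(c^R_X\otimes c^R_Y)\beta_{X,Y},\qquad
  c^L_{X\otimes Y}=(c^L_X\otimes c^L_Y)\beta_{X,Y},
\]
together with $c^R_I=c^L_I=\id_I$; the second is that duality exchanges the two curls,
\[
  c^R_{X^*}=(c^L_X)^*,\qquad c^L_{X^*}=(c^R_X)^*,
\]
in agreement with the relation $\gamma_{X^*}=(\gamma_X^{-1})^*$ recorded in the symmetric case.

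Granting these identities, the three closure properties are immediate. The unit case is trivial since $c^R_I=c^L_I=\id_I$. If $c^R_X=c^L_X$ and $c^R_Y=c^L_Y$, then $c^R_{X\otimes Y}=(c^R_X\otimes c^R_Y)\beta_{X,Y}=(c^L_X\otimes c^L_Y)\beta_{X,Y}=c^L_{X\otimes Y}$, so $X^p\otimes Y^p$ is ribbon; and if $c^R_X=c^L_X$, then $c^R_{X^*}=(c^L_X)^*=(c^R_X)^*=c^L_{X^*}$, so $(X^p)^*$ is ribbon. Hence $\modM^r$ is a strict braided pivotal subcategory of $\modM^p$. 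Finally, every object of $\modM^r$ satisfies $c^R_X=c^L_X$ by the definition of a ribbon object, which is precisely condition \eqref{e11}, so $\modM^r$ is a strict ribbon category.

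I expect the main obstacle to be the verification of the curl identities, and specifically the twist identity for the \emph{left} curl $c^L$. That the right curl $c^R$ is a twist is standard, but the whole tensor-closure step collapses unless the left curl obeys the same identity with the same factor $\beta_{X,Y}$, rather than $\beta_{X,Y}^{-1}$ or $\psi_{X,Y}\psi_{Y,X}$; pinning down this factor is the delicate diagrammatic point. The duality identities are comparatively routine, amounting to the compatibility of the curls with the left duality functor $(-)^*$.
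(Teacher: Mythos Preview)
Your proposal is correct and follows essentially the same route as the paper: reduce to closure of ribbon objects under the unit, tensor product, and duals, and then observe that the ribbon condition \eqref{e11} holds by definition. The only difference is presentational: for the tensor--closure step the paper gives a single diagrammatic chain transforming $c^L_{X\otimes Y}$ into $c^R_{X\otimes Y}$ directly, whereas you factor through the twist identities $c^{R/L}_{X\otimes Y}=(c^{R/L}_X\otimes c^{R/L}_Y)\psi_{Y,X}\psi_{X,Y}$; these amount to the same computation.
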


\begin{proof}
  To show that $\modM ^r$ is a strict braided pivotal subcategory of
  $\modM ^p$, it suffices to check the following.
\begin{enumerate}
\item The tensor product of two ribbon objects in $\modM ^p$ is a ribbon
  object in $\modM ^p$, because \\
\begin{gather*}
      {\raisebox{-7.mm}{\begin{overpic}[height=20mm, angle=0]{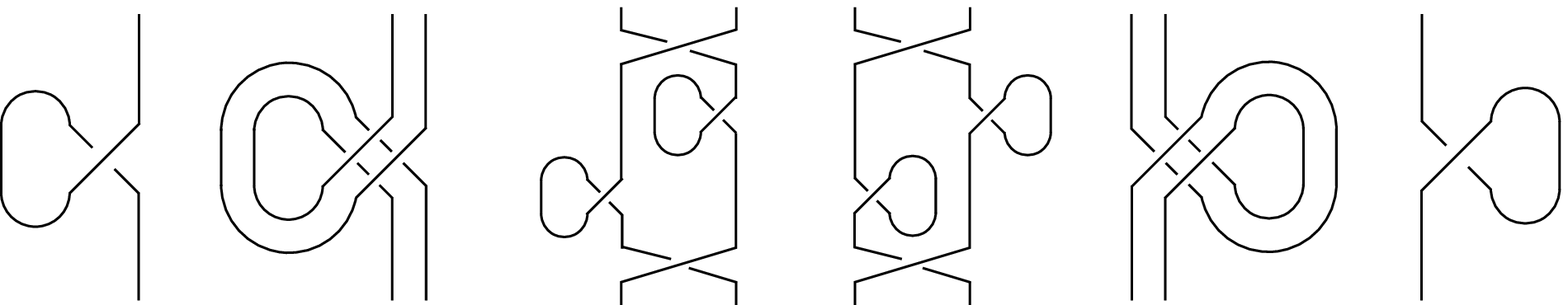}\put(10,60){$X \otimes   Y$}\put(10,-9){$X \otimes Y$}\put(70,60){$X$}\put(80,60){$Y$}\put(70,-9){$X$}\put(80,-9){$Y$}\put(255,60){$X \otimes Y$}\put(255,-9){$X \otimes Y$}\put(30,25){$=$}\put(89,25){$=$}\put(146,25){$=$}\put(202,25){$=$}\put(257,25){$=$}\end{overpic}}} \quad .
  \end{gather*} \\

\item The unit object $I^p$ of $\modM ^p$ is a ribbon object in $\modM ^p$, because \\ 
$c^R_I=(I \otimes d_{I^*})(\psi_{I,I}\otimes I^*)(I \otimes b_I)=(I \otimes d_{I})(\psi_{I,I}\otimes I)(I \otimes b_I)=\id_I$ \\and similarly $c^L_I=\id_I$. 
\item The dual of any ribbon object in $\modM ^p$ is a ribbon object, from the left duals of \eqref{e11}.
\end{enumerate}
Now $\modM ^r$ is a pivotal category.  By the definition of ribbon
objects, each object in $\modM ^r$ satisfies the ribbon condition
\eqref{e11}.  Hence $\modM ^r$ is a ribbon category.
\end{proof}

\section{Hopf algebras and Yetter--Drinfeld modules}
In this section, we first recall notions of Hopf algebras and Yetter--Drinfeld modules in a symmetric monoidal category, and define ribbon Yetter--Drinfeld modules to be ribbon objects in the category of Yetter--Drinfeld modules.

\subsection{Hopf algebras in a symmetric monoidal category}
\label{sec:hopf-algebr-symm}
Let $\modV $ be a strict symmetric monoidal category.

A {\em Hopf algebra} $H=(H,\mu ,\eta ,\Delta ,\epsilon ,S)$ in $\modV $ consists of an
object $H$ in $\modV $ and morphisms
\begin{gather*}
\mu= \ {\raisebox{-5mm}{\begin{overpic}[width=8mm, angle=180]{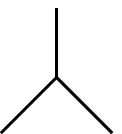}\put(-4,28){$H$}\put(19,28){$H$}\put(6,-9){$H$}\end{overpic}}} \quad \zzzcolon H\otimes H\rightarrow H,\quad
  \eta= \ {\raisebox{-6mm}{\begin{overpic}[width=.7mm, angle=180]{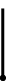}\put(-3,-9){$H$}\end{overpic}}} \quad \zzzcolon I\rightarrow H,\quad \\
\newline \\
\newline \\
  \Delta= \ {\raisebox{-5mm}{\begin{overpic}[width=8mm]{Delta.eps}\put(-5,-9){$H$}\put(18,-9){$H$}\put(8,28){$H$}\end{overpic}}} \quad \zzzcolon H\rightarrow H\otimes H,\quad
  \epsilon= \ {\raisebox{-2mm}{\begin{overpic}[width=.7mm]{epsilon.eps}\put(-4,24){$H$}\end{overpic}}} \quad \zzzcolon H\rightarrow I,\quad 
  S= \ {\raisebox{-4mm}{\begin{overpic}[width=5mm, angle=180]{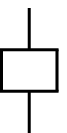}\put(3,33){$H$}\put(3,-9){$H$}\put(4,12){$S$}\end{overpic}}} \quad \zzzcolon H\rightarrow H
\end{gather*}
such that \\
\begin{gather}
  \label{e12}
{\raisebox{-5mm}{\begin{overpic}[width=10mm, angle=180]{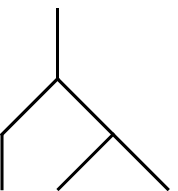}\put(-3,33){$H$}\put(15,33){$H$}\put(25,33){$H$}\put(13,-9){$H$}\put(40,10){$=$}\end{overpic}}}  \hspace{1.3cm}
{\raisebox{-5mm}{\begin{overpic}[width=10mm, angle=180]{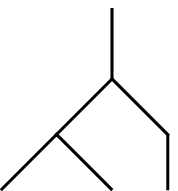}\put(-3,33){$H$}\put(8,33){$H$}\put(25,33){$H$}\put(5,-9){$H$}\end{overpic}}} \quad,  \hspace{1.cm}
{\raisebox{-5mm}{\begin{overpic}[width=5mm, angle=180]{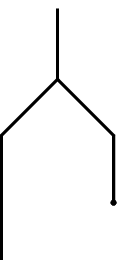}\put(10,33){$H$}\put(3,-9){$H$}\put(-7,23){$\eta$}\put(28,10){$=$}\end{overpic}}}  \hspace{1.3cm}
{\raisebox{-5mm}{\begin{overpic}[width=.3mm, angle=180]{isotopy1.eps}\put(-3,33){$H$}\put(-3,-9){$H$}\put(17,10){$=$}\end{overpic}}}  \hspace{1.3cm}
{\raisebox{-5mm}{\begin{overpic}[width=5mm, angle=180]{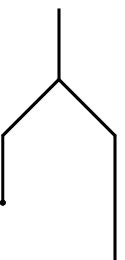}\put(-3,33){$H$}\put(3,-9){$H$}\put(16,23){$\eta$}\end{overpic}}} \quad ,   \\
\newline \nonumber \\
\newline \nonumber \\
{\raisebox{-5mm}{\begin{overpic}[width=10mm, angle=0]{coalgebrarelation1.eps}\put(-3,-9){$H$}\put(15,-9){$H$}\put(25,-9){$H$}\put(15,33){$H$}\put(40,10){$=$}\end{overpic}}}  \hspace{1.3cm}
{\raisebox{-5mm}{\begin{overpic}[width=10mm, angle=0]{coalgebrarelation1rev.eps}\put(-3,-9){$H$}\put(8,-9){$H$}\put(25,-9){$H$}\put(5,33){$H$}\end{overpic}}},  \hspace{1.cm}
{\raisebox{-5mm}{\begin{overpic}[width=5mm, angle=0]{coalgebrarelation2.eps}\put(12,-9){$H$}\put(3,33){$H$}\put(28,10){$=$}\put(-7,5){$\epsilon$}\end{overpic}}}  \hspace{1.3cm}
{\raisebox{-5mm}{\begin{overpic}[width=.3mm, angle=180]{isotopy1.eps}\put(-3,-9){$H$}\put(-3,33){$H$}\put(17,10){$=$}\end{overpic}}}  \hspace{1.3cm}
{\raisebox{-5mm}{\begin{overpic}[width=5mm, angle=0]{coalgebrarelation2rev.eps}\put(-3,-9){$H$}\put(3,33){$H$}\put(16,5){$\epsilon$}\end{overpic}}} \quad ,   \\
\newline \nonumber \\
\newline \nonumber \\
    \label{e14}
{\raisebox{-2.5mm}{\begin{overpic}[width=.5mm, angle=0]{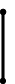}\put(4,18){$\eta$}\put(4,-5){$\epsilon$}\end{overpic}}} \hspace{.5cm} = (\text{empty}),\quad
{\raisebox{0.mm}{\begin{overpic}[width=7.mm, angle=0]{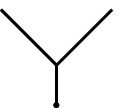}\put(13,-1){$\epsilon$}\put(18,20){$H$}\put(-4,20){$H$}\end{overpic}}}   \hspace{.5cm} = \hspace{.5cm}
{\raisebox{.5mm}{\begin{overpic}[width=.5mm, angle=0]{epsilon.eps}\put(4,-3){$\epsilon$}\put(-4,18){$H$}\end{overpic}}} \hspace{.5cm}
{\raisebox{.5mm}{\begin{overpic}[width=.5mm, angle=0]{epsilon.eps}\put(4,-3){$\epsilon$}\put(-4,18){$H$}\end{overpic}}} \quad , \quad
{\raisebox{-5.mm}{\begin{overpic}[width=7.mm, angle=180]{Hopfalgebrarelation1.eps}\put(13,15){$\eta$}\put(18,-9){$H$}\put(-4,-9){$H$}\end{overpic}}}   \hspace{.5cm} = \hspace{.5cm}
{\raisebox{-4.mm}{\begin{overpic}[width=.5mm, angle=180]{epsilon.eps}\put(3,15){$\eta$}\put(-4,-9){$H$}\end{overpic}}} \hspace{.5cm}
{\raisebox{-4.mm}{\begin{overpic}[width=.5mm, angle=180]{epsilon.eps}\put(3,15){$\eta$}\put(-4,-9){$H$}\end{overpic}}} \quad , \\
\newline \nonumber \\
\newline \nonumber \\
  \label{e15}
{\raisebox{-6.mm}{\begin{overpic}[width=30.mm, angle=0]{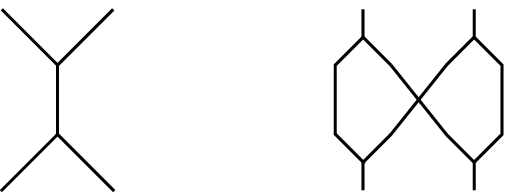}\put(-4,35){$H$}\put(18,35){$H$}\put(18,-9){$H$}\put(-4,-9){$H$}\put(57,35){$H$}\put(78,35){$H$}\put(57,-9){$H$}\put(78,-9){$H$}\put(40,16){$=$}\end{overpic}}} \quad, \\  
  \label{e16}
\newline \nonumber \\
\newline \nonumber \\
{\raisebox{-10.mm}{\begin{overpic}[width=10.mm, angle=0]{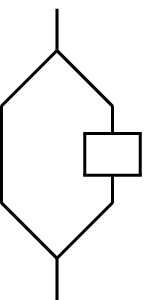}\put(6,-9){$H$}\put(7,62){$H$}\put(19,26.5){$S$}\end{overpic}}} \hspace{.5cm} = \hspace{.5cm}
{\raisebox{-10.mm}{\begin{overpic}[width=10.mm, angle=180]{antipoderelation.eps}\put(14,-9){$H$}\put(14,62){$H$}\put(3,26){$S$}\end{overpic}}} \hspace{.5cm} = \hspace{1cm}
{\raisebox{-10.mm}{\begin{overpic}[width=.45mm, angle=0]{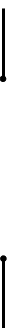}\put(-3,-9){$H$}\put(-3,62){$H$}\put(3,13){$\eta$}\put(3,43){$\epsilon$}\end{overpic}}} \hspace{.5cm} \quad . 
\end{gather} \\
\begin{remark}
It is known that the following statements are from above relations. \\
\begin{gather}
  \label{e16-2}
{\raisebox{-7.mm}{\begin{overpic}[width=13.mm, angle=0]{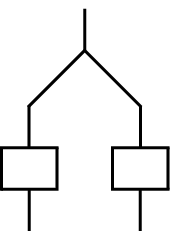}\put(2,-9){$H$}\put(26,-9){$H$}\put(14,52){$H$}\put(28,11){$S$}\put(2,11){$S$}\end{overpic}}} \hspace{.5cm} = \hspace{.5cm}
{\raisebox{-7.mm}{\begin{overpic}[width=9.mm, angle=0]{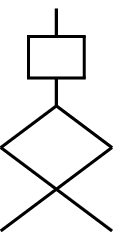}\put(-5,-9){$H$}\put(18,-9){$H$}\put(8,52){$H$}\put(9,37){$S$} \end{overpic}}} \hspace{.5cm} \ , \hspace{.5cm}
{\raisebox{0.mm}{\begin{overpic}[width=5.mm, angle=0]{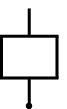}\put(3,28){$H$}\put(4,10){$S$}\put(11,0){$\epsilon$}\end{overpic}}} \hspace{.5cm} = \hspace{.5cm}
{\raisebox{0.mm}{\begin{overpic}[width=.8mm, angle=0]{epsilon.eps}\put(-2,28){$H$}\put(6,0){$\epsilon$}\end{overpic}}} \hspace{.2cm} \quad , \hspace{.2cm} \\
\newline \nonumber \\
{\raisebox{-7.mm}{\begin{overpic}[width=13.mm, angle=180]{Delta-antipode.eps}\put(2,50){$H$}\put(26,50){$H$}\put(14,-9){$H$}\put(28,32){$S$}\put(2,32){$S$}\end{overpic}}} \hspace{.5cm} = \hspace{.5cm}
{\raisebox{-7.mm}{\begin{overpic}[width=9.mm, angle=180]{antipode-Delta.eps}\put(-5,52){$H$}\put(18,52){$H$}\put(8,-9){$H$}\put(9,7){$S$} \end{overpic}}} \hspace{.5cm} \ , \hspace{.5cm}
{\raisebox{-7.mm}{\begin{overpic}[width=5.mm, angle=180]{antipode-epsilon.eps}\put(3,-11){$H$}\put(4,8.5){$S$}\put(11,22){$\eta$}\end{overpic}}} \hspace{.5cm} = \hspace{.5cm}
{\raisebox{-7.mm}{\begin{overpic}[width=.8mm, angle=180]{epsilon.eps}\put(-2,-9){$H$}\put(4,22){$\eta$}\end{overpic}}} \hspace{.5cm}.
\end{gather} \\
\end{remark}
In what follows, we always assume that the antipode $S$ is invertible.
This means that there is a morphism $S^{-1}\zzzcolon H\rightarrow H$ satisfying \\
\begin{gather}
  \label{e17}
{\raisebox{-7.mm}{\begin{overpic}[width=8.mm, angle=180]{composition.eps}\put(6,-9){$H$}\put(8,50){$H$}\put(3,31){$S^{-1}$}\put(7,10){$S$}\end{overpic}}} \hspace{.5cm} = \hspace{.5cm}
{\raisebox{-7.mm}{\begin{overpic}[width=8.mm, angle=180]{composition.eps}\put(6,-9){$H$}\put(3,8.5){$S^{-1}$}\put(8,50){$H$}\put(7,31){$S$}\end{overpic}}} \hspace{.5cm} = \hspace{.5cm}
{\raisebox{-6mm}{\begin{overpic}[width=.45mm, angle=180]{isotopy1.eps}\put(-3,50){$H$}\put(-3,-9){$H$}\end{overpic}}} \quad . 
\end{gather} \\

The multi-input multiplications and comultiplications \\
\begin{gather*}
\mu _n = \quad {\raisebox{-5mm}{\begin{overpic}[width=30mm, angle=0]{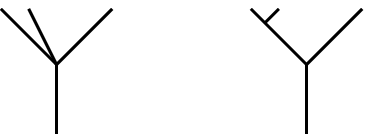}\put(-5,32){$H$}\put(5,32){$H$}\put(23,32){$H$}\put(8,-9){$H$}\put(37,14){$:=$}\put(55,32){$H$}\put(65,32){$H$}\put(80,32){$H$}\put(10,28){$\dots$}\put(68,28){$\dots$}\put(66,-9){$H$}\end{overpic}}} \quad \zzzcolon H^{\otimes n} \rightarrow H, \\
\newline \\
\newline \\
\Delta _n =  \quad {\raisebox{-5mm}{\begin{overpic}[width=30mm, angle=0]{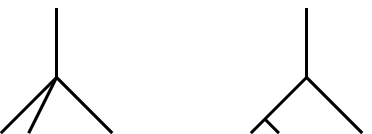}\put(-5,-9){$H$}\put(5,-9){$H$}\put(23,-9){$H$}\put(9,32){$H$}\put(10,4){$\dots$}\put(68,4){$\dots$}\put(37,14){$:=$}\put(55,-9){$H$}\put(65,-9){$H$}\put(80,-9){$H$}\put(68,32){$H$}\end{overpic}}} \quad \zzzcolon H\rightarrow H^{\otimes n} \\
\end{gather*}
for $n\ge 0$ are defined inductively by
\begin{gather*}
  \mu _0=\eta ,\quad \mu _1=\id_H,\quad \mu _n=\mu (\mu _{n-1}\otimes H)\quad
  (n\ge 2),\\
  \Delta _0=\epsilon ,\quad \Delta _1=\id_H,\quad
  \Delta _n=(\Delta _{n-1}\otimes H)\Delta \quad (n\ge 2).
\end{gather*}
These morphisms satisfy the generalized (co)associativity relations: \\
\begin{gather}
{\raisebox{-5mm}{\begin{overpic}[width=40mm, angle=0]{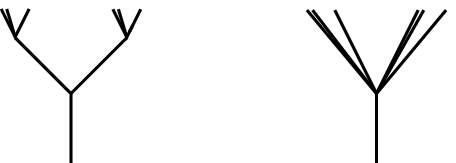}\put(-5,45){$H^{\otimes k_1}$}\put(28,45){$H^{\otimes k_m}$}\put(15,-8){$H$}\put(12,38){$\dots$}\put(86,49){$\dots$}\put(50,14){$=$}\put(111,45){$H$}\put(88,40){$\dots$}\put(70,45){$H$}\put(90,-8){$H$}\end{overpic}}} \quad, \hspace{1.cm}
{\raisebox{-5mm}{\begin{overpic}[width=40mm, angle=0]{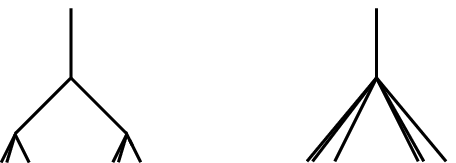}\put(-5,-9){$H^{\otimes k_1}$}\put(28,-9){$H^{\otimes k_m}$}\put(15,45){$H$}\put(12,5){$\dots$}\put(90,5){$\dots$}\put(50,14){$=$}\put(68,-9){$H$}\put(111,-9){$H$}\put(90,-6){$\dots$}\put(92,45){$H$}\end{overpic}}} \quad, \hspace{1.cm} 
\end{gather} \\
for $m\ge 0,k_1,\ldots,k_m\ge 0$.

\subsection{Yetter--Drinfeld modules}
\label{sec:yett-drinf-modul}

Let $H$ be a Hopf algebra in a strict symmetric monoidal category $\modV $.

A (left-left) {\em Yetter--Drinfeld module} over $H$ in $\modV $ is a
triple $(X,\alpha ,\beta )$ of an object $X$ in $\modV $ and morphisms \\ 
\begin{gather*}
{\alpha = \quad \raisebox{-6.mm}{\begin{overpic}[height=12mm]{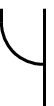}\put(-4,36){$H$}\put(14,36){$X$}\end{overpic}}} \quad \zzzcolon H\otimes X\rightarrow X,\quad\quad \beta = \quad \raisebox{-6.mm}{\begin{overpic}[height=12mm]{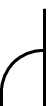}\put(-4,-8){$H$}\put(14,-8){$X$}\end{overpic}}\quad
\zzzcolon X\rightarrow H\otimes X
\end{gather*}
satisfying the following conditions.
\begin{enumerate}
\item $(X,\alpha )$ is a left $H$-module, i.e., we have \\
\begin{gather*}
\raisebox{-6.mm}{\begin{overpic}[height=12mm]{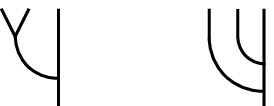}\put(-4,36){$H$}\put(6,36){$H$}\put(15,36){$X$}\put(15,-8){$X$}\put(40,17){$=$}\put(63,36){$H$}\put(73,36){$H$}\put(83,36){$X$}\put(83,-8){$X$}\put(23,6){$\alpha$}\put(90,5){$\alpha$}\put(90,13){$\alpha$}\end{overpic}}\quad\quad , \hspace{1.5cm} \raisebox{-6.mm}{\begin{overpic}[height=12mm]{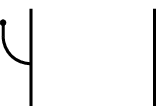}\put(8,36){$X$}\put(8,-8){$X$}\put(50,36){$X$}\put(50,-8){$X$}\put(30,17){$=$}\put(14,11){$\alpha$}\end{overpic}}\quad .
\end{gather*} \\
\item $(X,\beta )$ is a left $H$-comodule, i.e., we have \\
\begin{gather*}
\raisebox{-6.mm}{\begin{overpic}[height=12mm]{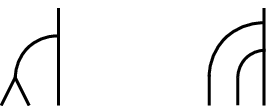}\put(-4,-8){$H$}\put(6,-8){$H$}\put(15,36){$X$}\put(15,-8){$X$}\put(40,17){$=$}\put(63,-8){$H$}\put(73,-8){$H$}\put(83,36){$X$}\put(83,-8){$X$}\put(23,20){$\beta$}\put(90,25){$\beta$}\put(90,15){$\beta$}\end{overpic}}\quad\quad , \hspace{1.5cm} \raisebox{-6.mm}{\begin{overpic}[height=12mm]{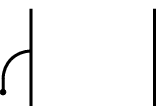}\put(8,36){$X$}\put(7,-8){$X$}\put(50,36){$X$}\put(49,-8){$X$}\put(30,17){$=$}\put(14,13){$\beta$}\end{overpic}}\quad .
\end{gather*} \\
\item We have 
\begin{gather*}
 \raisebox{-7.mm}{\begin{overpic}[height=17mm]{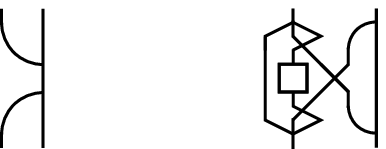}\put(-4,-8){$H$}\put(-4,50){$H$}\put(12,50){$X$}\put(12,-8){$X$}\put(50,23){$=$}\put(94,22){$S$}\put(92,-8){$H$}\put(92,50){$H$}\put(120,-8){$X$}\put(120,50){$X$}\put(18,29){$\alpha$}\put(18,15){$\beta$}\put(128,5){$\alpha$}\put(128,40){$\beta$}\end{overpic}}\quad\quad .
\end{gather*} \\
\end{enumerate}

A {\em morphism} of Yetter--Drinfeld modules
\begin{gather*}
  f\zzzcolon (X,\alpha ,\beta )\rightarrow (X',\alpha ',\beta ')
\end{gather*}
is a morphism $f\zzzcolon X\rightarrow X'$ in $\modV $ that is both a left $H$-module morphism
$f\zzzcolon (X,\alpha )\rightarrow (X',\alpha ')$ and a left $H$-comodule morphism
$f\zzzcolon (X,\beta )\rightarrow (X',\beta ')$, i.e., we have \\
\begin{gather*}
 \raisebox{-5.mm}{\begin{overpic}[height=12mm]{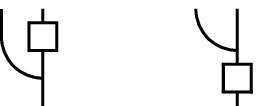}\put(12,22){$f$}\put(-4,36){$H$}\put(12,36){$X$}\put(12,-8){$X'$}\put(45,15){$=$}\put(79,-8){$X'$}\put(64,36){$H$}\put(80,36){$X$}\put(80,7){$f$}\put(19,8){$\alpha'$}\put(87,19){$\alpha$}\end{overpic}}\quad\quad , \hspace{1.5cm} \raisebox{-5.mm}{\begin{overpic}[height=12mm]{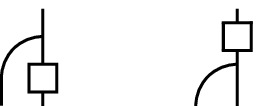}\put(12,7){$f$}\put(-4,-8){$H$}\put(12,36){$X$}\put(12,-8){$X'$}\put(40,15){$=$}\put(80,-8){$X'$}\put(64,-8){$H$}\put(80,36){$X$}\put(80,22){$f$}\put(18,21){$\beta$}\put(86,9){$\beta'$}\end{overpic}}\quad\quad . 
\end{gather*} \\

Let $\YDHV$ denote the category of Yetter--Drinfeld modules over $H$ in
$\modV $ and morphisms of Yetter--Drinfeld modules.  The category $\YDHV$ has a
structure of a strict braided monoidal category as follows
(\cite{Yetter,Drinfeld1}).  The monoidal unit of $\YDHV$ is
$(I,\epsilon ,\eta )$.  The tensor product of $(X,\alpha ,\beta )$ and $(X',\alpha ',\beta ')$ is
given by
\begin{gather*}
  (X,\alpha ,\beta )\otimes (X',\alpha ',\beta ')=(X\otimes X',\alpha '',\beta ''),
\end{gather*}
where
\begin{gather}
  \label{e2}
\alpha '' = \quad \raisebox{-5.mm}{\begin{overpic}[height=12mm]{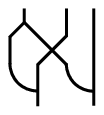}\put(2,36){$H$}\put(14,36){$X$}\put(7,-8){$X$}\put(24,-8){$X'$}\put(25,36){$X'$}\put(-5,5){$\alpha$}\put(30,5){$\alpha'$}\end{overpic}}\quad \quad, \quad\quad
\beta '' = \quad \raisebox{-5.mm}{\begin{overpic}[height=12mm]{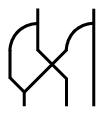}\put(2,-8){$H$}\put(14,-8){$X$}\put(7,36){$X$}\put(24,36){$X'$}\put(25,-8){$X'$}\put(-0,28){$\beta$}\put(30,28){$\beta'$}\end{overpic}}\quad \quad .
\end{gather}
\medskip
The braiding
\begin{gather*}
  \psi _{(X,\alpha ,\beta ),(X',\alpha ',\beta ')}\zzzcolon (X,\alpha ,\beta )\otimes (X',\alpha ',\beta ')\rightarrow (X',\alpha ',\beta ')\otimes (X,\alpha ,\beta )
\end{gather*}
and its inverse
\begin{gather*}
  \psi _{(X,\alpha ,\beta ),(X',\alpha ',\beta ')}^{-1}\zzzcolon (X',\alpha ',\beta ')\otimes (X,\alpha ,\beta )\rightarrow (X,\alpha ,\beta )\otimes (X',\alpha ',\beta ')
\end{gather*}
are given by
\begin{gather*}
\psi _{(X,\alpha ,\beta ),(X',\alpha ',\beta ')}= \quad \raisebox{-7.mm}{\begin{overpic}[height=15mm]{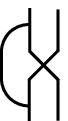}\put(6,45){$X$}\put(19,45){$X'$}\put(6,-8){$X'$}\put(19,-8){$X$}\put(-5,3){$\alpha'$}\put(-5,38){$\beta$}\end{overpic}}\quad \quad,
\quad\quad
\psi _{(X,\alpha ,\beta ),(X',\alpha ',\beta ')}^{-1}= \quad \raisebox{-9.5mm}{\begin{overpic}[height=20mm]{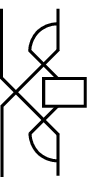}\put(-6,59){$X'$}\put(15,59){$X$}\put(-6,-8){$X$}\put(15,-8){$X'$}\put(14,27){\tiny $S^{-1}$}\put(22,5){$\alpha'$}\put(22,50){$\beta$}\end{overpic}}\quad \quad.
\end{gather*} \\

\subsection{Ribbon Yetter--Drinfeld modules}

A {\em ribbon Yetter--Drinfeld module} is a ribbon object in the strict braided
monoidal category $\YDHV$ of Yetter--Drinfeld modules over $H$ in $\modV $.  The {\em
category of ribbon Yetter--Drinfeld modules} is $\rYDHV:=(\YDHV)^r$, the category of
ribbon objects in $\YDHV$.
Proposition \ref{r24} immediately implies the following.
\begin{theorem}
  \label{r34}
  The category $\rYDHV$ is a strict ribbon category.
\end{theorem}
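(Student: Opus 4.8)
The plan is to deduce this directly from Proposition~\ref{r24}, which asserts that $\modM^r$ is a strict ribbon category for \emph{every} strict braided monoidal category $\modM$. The entire strategy is therefore to verify that the hypothesis of that proposition holds for $\modM = \YDHV$ and then to read off the conclusion, after matching notation.

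First I would recall from Section~\ref{sec:yett-drinf-modul} that $\YDHV$ carries the structure of a strict braided monoidal category: the monoidal unit is $(I,\epsilon,\eta)$, the tensor product of Yetter--Drinfeld modules is given by \eqref{e2}, and the braiding is the morphism $\psi_{(X,\alpha,\beta),(X',\alpha',\beta')}$ displayed above. The one point worth emphasizing is that this braiding is genuinely an isomorphism: its inverse is exhibited explicitly using $S^{-1}$, which is available precisely because we assume throughout that the antipode $S$ is invertible. Thus $\YDHV$ satisfies exactly the hypothesis required by Proposition~\ref{r24}.

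It then remains only to match notation. By definition $\rYDHV = (\YDHV)^r$ is the full subcategory of ribbon objects in $\YDHV$, which is precisely the category $\modM^r$ of Proposition~\ref{r24} in the case $\modM = \YDHV$. Applying that proposition yields at once that $\rYDHV$ is a strict ribbon category, completing the proof. There is no substantive obstacle here, since all the genuine work has already been carried out---partly in establishing the braided monoidal structure on $\YDHV$, and partly in proving Proposition~\ref{r24}; the only things I would take care to confirm are the notational identification $\rYDHV = (\YDHV)^r$ and the role of the invertibility of $S$ in guaranteeing that the braiding, and hence the whole braided structure, is well defined.
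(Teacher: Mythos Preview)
Your proposal is correct and matches the paper's approach exactly: the paper states that Proposition~\ref{r24} immediately implies Theorem~\ref{r34}, using the definition $\rYDHV:=(\YDHV)^r$ and the fact (recalled in Section~\ref{sec:yett-drinf-modul}) that $\YDHV$ is a strict braided monoidal category. Your added remarks about the role of the invertible antipode in making the braiding an isomorphism are accurate and consistent with the paper's standing assumption.
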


We have a simpler definition of ribbon Yetter--Drinfeld modules as follows.

\begin{proposition}
  \label{r35}
  A ribbon Yetter--Drinfeld module is equivalent to a pair of
  \begin{itemize}
  \item a Yetter--Drinfeld module $(X,\alpha _X,\beta _X)$ over $H$ in $\modV $, and
  \item a pivotal object $(X,X^*,\ev_X,\coev_X,\ev_{X^*},\coev_{X^*})$
    for $X$ in $\modV $
  \end{itemize}
  that satisfies 
  \begin{gather}
    \label{e19} \gamma _X\alpha _X=\alpha _X(S^2\otimes \gamma _X),\\
    \label{e20}\beta _X\gamma _X=(S^2\otimes \gamma _X)\beta _X,\\
    \label{e21} c_X^R=c_X^L,
  \end{gather}
  where $\gamma _X\zzzcolon X\rightarrow X$ is the right curl for $X$ defined by \eqref{e22},
  and $c_X^R,c_X^L\zzzcolon X\rightarrow X$ are the right and left positive curls defined by
  \eqref{e23}.
\end{proposition}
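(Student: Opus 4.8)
The plan is to unwind ``ribbon object in $\YDHV$'' into duality data plus compatibility, and to match these with \eqref{e19}--\eqref{e21}. By definition a ribbon Yetter--Drinfeld module is a ribbon object in $\YDHV$: a pivotal object $X^p=\XX{X}$ in $\YDHV$ satisfying the ribbon condition \eqref{e11}. Such a pivotal object is the data of two Yetter--Drinfeld modules $X,X^*$ and four morphisms of Yetter--Drinfeld modules $\ev_X,\coev_X,\ev_{X^*},\coev_{X^*}$ forming two dualities. Since the forgetful functor $\YDHV\to\modV$ is faithful and strict monoidal, it sends these to two dualities in $\modV$, and an equation between morphisms holds in $\YDHV$ if and only if it holds in $\modV$. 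Hence the underlying data is precisely a pivotal object $\XX{X}$ in $\modV$ (all snake identities are inherited from $\modV$), and the only extra content over such a pivotal object is that $X$ be a Yetter--Drinfeld module, that $X^*$ carry a Yetter--Drinfeld structure, and that the four structure maps be morphisms of Yetter--Drinfeld modules. I will show this extra content is equivalent to \eqref{e19} together with \eqref{e20}, while \eqref{e11} is literally \eqref{e21}.

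First I would dispose of the first duality. Given $(\alpha_X,\beta_X)$ and the duality $(X,X^*,\ev_X,\coev_X)$ in $\modV$, there is a unique Yetter--Drinfeld structure on $X^*$ making $\ev_X$ and $\coev_X$ morphisms, namely the left-dual action and coaction obtained as the mates of $\alpha_X,\beta_X$ through $\ev_X,\coev_X$ (this uses invertibility of $S$); with it $\ev_X,\coev_X$ are automatically morphisms, so the first duality imposes no condition. The constraint lies in the second duality $(X^*,X,\ev_{X^*},\coev_{X^*})$, which exhibits the original object $X$ as the left dual of $X^*$, i.e.\ realises $X=(X^*)^*$ inside $\YDHV$. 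By the standard double-dual computation for Yetter--Drinfeld modules (equivalently, for $D(H)$-modules in the finite-dimensional situation of the introduction), the left-dual-of-the-left-dual structure on $X^{**}$ is the original structure with both action and coaction conjugated by $S^2$, the antipode being squared because dualising squares it. In the symmetric pivotalization $\modV^p$ the identification $X^{**}=X$ differs from this canonical comparison exactly by the right curl $\gamma_X$ of \eqref{e22}. Consequently $\gamma_X$ is forced to be an isomorphism from $X$ onto its $S^2$-twist, which is precisely \eqref{e19} on the action and \eqref{e20} on the coaction.

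Conversely, starting from a Yetter--Drinfeld module $X$, a pivotal object $\XX{X}$ in $\modV$, and \eqref{e19},\eqref{e20}, I would put the left-dual structure on $X^*$, so that $\ev_X,\coev_X$ are morphisms for free; then \eqref{e19} and \eqref{e20} make $\ev_{X^*}$ a module morphism and $\coev_{X^*}$ a comodule morphism, by a direct verification in which the Hopf identity $S(h_{(2)})S^2(h_{(1)})=\epsilon(h)$, obtained from \eqref{e16} by applying $S$ to $\sum S(h_{(1)})h_{(2)}=\epsilon(h)$, makes the $S^2$-twist cancel against the antipode axioms. The two snake identities hold in $\YDHV$ because they already hold in $\modV$ and the forgetful functor is faithful, so we obtain a genuine pivotal object in $\YDHV$. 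Finally, by \eqref{e23} the positive curls $c^R_X,c^L_X$ of this object are computed with the Yetter--Drinfeld braiding $\psi$, and by definition the object is ribbon precisely when $c^R_X=c^L_X$, i.e.\ \eqref{e21}. Combining the two directions yields the stated equivalence; that these objects form a ribbon category is Proposition~\ref{r24}.

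The main obstacle is the double-dual computation in the second paragraph: one must identify, by an explicit string-diagram calculation, the second-duality maps $\ev_{X^*},\coev_{X^*}$ in terms of $\ev_X,\coev_X$ and $\gamma_X$, and check that the module- and comodule-morphism conditions for them collapse cleanly to \eqref{e19} and \eqref{e20} with the antipode appearing exactly squared. Particular care is needed on the comodule side to confirm that the twist is again $S^2$ and not $S^{-2}$, which is where invertibility of $S$ and the $S^{-1}$ in the inverse braiding enter, and to verify that \eqref{e19} is equivalent to---not merely implied by---the module condition, so that no compatibility is silently dropped. Everything else, namely the bookkeeping of the first duality, the transport of the snake identities along the faithful forgetful functor, and the identification of the ribbon condition with \eqref{e21}, is routine.
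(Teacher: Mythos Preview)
Your proposal is correct and follows essentially the same route as the paper: both arguments fix the Yetter--Drinfeld structure on $X^*$ uniquely from the first duality $(X,X^*,\ev_X,\coev_X)$, then identify the requirement that $\ev_{X^*},\coev_{X^*}$ be Yetter--Drinfeld morphisms with \eqref{e19} and \eqref{e20}, and finally observe that \eqref{e21} is literally the ribbon condition. Your framing via the double dual carrying the $S^2$-twisted structure, with $\gamma_X$ as the comparison isomorphism, is a clean conceptual packaging of what the paper does by direct string-diagram manipulation; the paper's explicit graphical computations are exactly the ``main obstacle'' you flag, and your sketch of the converse direction (defining $\alpha_{X^*},\beta_{X^*}$ as mates and then verifying the second duality maps are morphisms using \eqref{e19},\eqref{e20}) matches the paper's construction of the map $j$ and its check that $ji=\id$.
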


By this proposition, we may regard
$(X,X^*,\ev_X,\coev_X,\ev_{X^*},\coev_{X^*},\alpha _X,\beta _X)$ satisfying the
above conditions as a ribbon Yetter--Drinfeld module.

\begin{proof}
  Let $Z$ denote the set of the data
  $(X,X^*,\ev_X,\coev_X,\ev_{X^*},\coev_{X^*},\alpha _X,\beta _X)$ satisfying
  the conditions given in the proposition.  We will construct a
  bijection between $Z$ and $\Ob(\rYDHV)$.
  
  If $\hat X=((X,\alpha _X,\beta _X),(X^*,\alpha _{X^*},\beta _{X^*}),\evv
  X)\in \Ob(\rYDHV)$, then clearly $(X,\alpha _X,\beta _X)$ is a Yetter--Drinfeld module and
  $(X,X^*,\ev_X,\coev_X,\ev_{X^*},\coev_{X^*})$ is a pivotal object in
  $\modV $.  
  Since $\ev_{X^*}$ and $\coev_{X}$ are morphisms of Yetter--Drinfeld modules,
  we have \\
  \begin{gather*}
 {\raisebox{-14mm}{\begin{overpic}[height=30mm, angle=0]{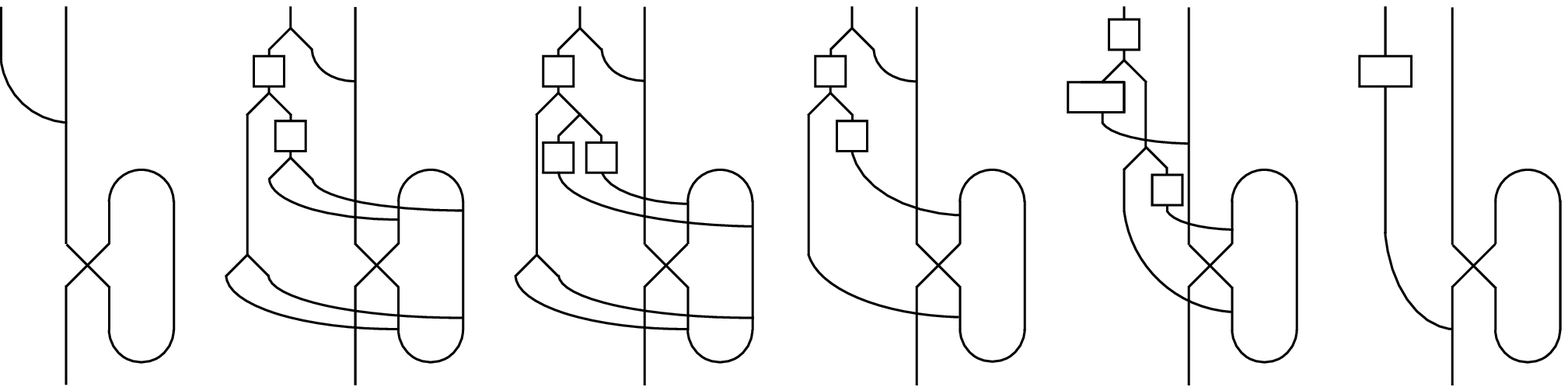}\put(-4,87){$H$}\put(11,87){$X$}\put(11,-9){$X$}\put(18,58){$\alpha_X$}\put(43,40){$=$}\put(57,69){\tiny $S$}\put(62,54){\tiny $S$}\put(107,40){$=$}\put(122,68){\tiny $S$}\put(122,49){\tiny $S$}\put(132,49){\tiny $S$}\put(170,40){$=$}\put(182,69){\tiny $S$}\put(187,54){\tiny $S$}\put(233,40){$=$}\put(248,77){\tiny $S$}\put(237,62){\tiny $S^{-1}$}\put(258,42){\tiny $S$}\put(293,40){$=$}\put(304,68){\tiny $S^2$}\put(304,87){$H$}\put(319,87){$X$}\put(319,-9){$X$}\end{overpic}}} \quad.
 \end{gather*} \\
 Hence we have \eqref{e19}.
  Similarly, \eqref{e20} follows since $\ev_{X}$ and $\coev_{X^*}$ are morphisms of
  Yetter--Drinfeld modules.  The identity \eqref{e21} is just the ribbon
  condition \eqref{e11}.  Thus we have a map $i\zzzcolon \Ob(\rYDHV)\rightarrow Z$, $\hat
  X\mapsto i(\hat X)=(X,X^*,\evv X,\alpha _X,\beta _X)$.  
  
  Conversely, we construct a map $j\zzzcolon Z\rightarrow \Ob(\rYDHV)$ as follows.
  Let $\check X=(X,X^*,\evv X,\alpha _X,\beta _X)\in Z$.
  We have a Yetter--Drinfeld module $(X,\alpha _X,\beta _X)$.  Setting
\begin{gather}
    \label{e29}
 \alpha _{X^*}= \quad {\raisebox{-10mm}{\begin{overpic}[height=20mm, angle=0]{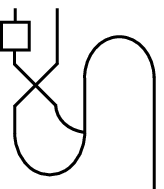}\put(0,60){$H$}\put(15,60){$X^*$}\put(40,-9){$X^*$}\put(2,46){\tiny $S$}\put(29,18){$\alpha_X$}\end{overpic}}} \quad, \quad\quad 
\beta _{X^*}= \quad {\raisebox{-10mm}{\begin{overpic}[height=20mm, angle=0]{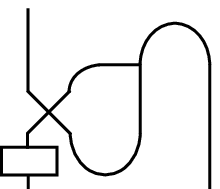}\put(3,-9){$H$}\put(3,60){$X^*$}\put(58,-9){$X^*$}\put(2,7){\tiny $S^{-1}$}\put(45,35){$\beta_X$}\end{overpic}}} \quad,
  \end{gather} \\
  we obtain another Yetter--Drinfeld module $(X^*,\alpha _{X^*},\beta _{X^*})$, since \\
  \begin{gather*}
  \quad {\raisebox{-15mm}{\begin{overpic}[height=25mm, angle=0]{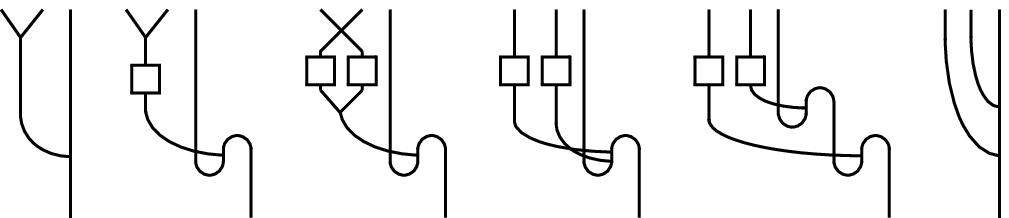}\put(-4,73){$H$}\put(9,73){$H$}\put(20,73){$X^*$}\put(20,-9){$X^*$}\put(25,20){$\alpha_{X^*}$}\put(30,35){$=$}\put(305,73){$H$}\put(318,73){$H$}\put(329,73){$X^*$}\put(325,-9){$X^*$}\put(90,35){$=$}\put(150,35){$=$}\put(216,35){$=$}\put(300,35){$=$}\put(44,44){$S$}\put(102,46){$S$}\put(117,46){$S$}\put(166,46){$S$}\put(180,46){$S$}\put(230,46){$S$}\put(244,46){$S$}\put(332,20){$\alpha_{X^*}$}\put(332,37){$\alpha_{X^*}$}\end{overpic}}} \quad
  \end{gather*} \\
and we obtain the other left $H$-module and $H$-comodule axioms similarly. 
  Using \\
   \begin{gather*}
  \quad {\raisebox{-8mm}{\begin{overpic}[height=20mm, angle=0]{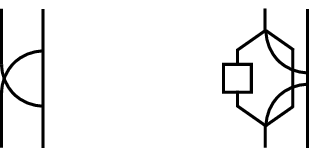}\put(-4,58){$H$}\put(-4,-9){$H$}\put(15,58){$X$}\put(15,-9){$X$}\put(20,15){$\alpha_X$}\put(20,37){$\beta_X$}\put(53,26){$=$}\put(100,58){$H$}\put(117,58){$X$}\put(117,-9){$X$}\put(100,-9){$H$}\put(89,26){$S$}\put(123,32){$\alpha_X$}\put(123,20){$\beta_X$}\end{overpic}}} \quad \quad,
  \end{gather*} \\
 we have \\
   \begin{gather*}
{\raisebox{-10mm}{\begin{overpic}[height=22mm, angle=0]{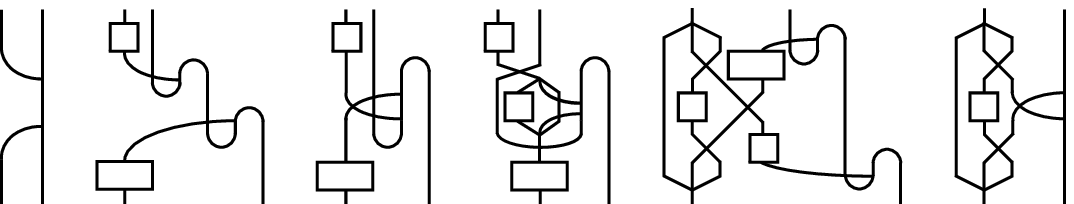}\put(-4,64){$H$}\put(-4,-9){$H$}\put(10,64){$X^*$}\put(10,-9){$X^*$}\put(20,30){$=$}\put(35,50){$S$}\put(30,7){$S^{-1}$}\put(88,30){$=$}\put(104,50){$S$}\put(98,6){$S^{-1}$}\put(138,30){$=$}\put(158,28){$S$}\put(152,50){$S$}\put(159,6){$S^{-1}$}\put(196,30){$=$}\put(211,28){$S$}\put(234,16){$S$}\put(227,40){$S^{-1}$}\put(284,30){$=$}\put(302,64){$H$}\put(327,64){$X^*$}\put(326,-9){$X^*$}\put(301,-9){$H$}\put(303,28){$S$}\put(303,28){$S$}\end{overpic}}} \quad.   
\end{gather*} \\
  Then one can check that $\ev_X,\coev_X,\ev_{X^*},\coev_{X^*}$ are
  morphisms of Yetter--Drinfeld modules as follows.
  The morphism $\ev_X\zzzcolon X^*\otimes X\rightarrow I$ is a morphism of Yetter--Drinfeld modules since \\
  \begin{gather*}
{\raisebox{-10mm}{\begin{overpic}[height=18mm, angle=0]{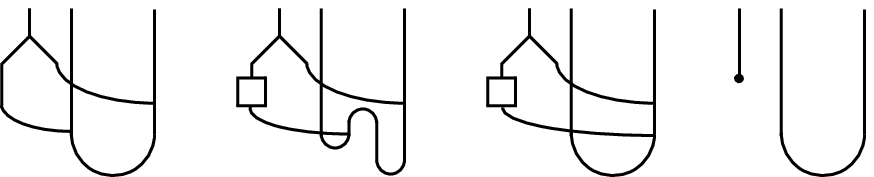}\put(4,54){$H$}\put(18,54){$X^*$}\put(40,54){$X$}\put(71,23){$S$}\put(25,12){$\alpha_{X^*}$}\put(49,21){$\alpha_{X}$}\put(200,30){$=$}\put(144,23){$S$}\put(57,30){$=$}\put(125,30){$=$}\put(213,54){$H$}\put(252,54){$X$}\put(228,54){$X^*$}\end{overpic}}} \quad 
  \end{gather*} \\
  and \\
  \begin{gather*}
{\raisebox{-10mm}{\begin{overpic}[height=21mm, angle=0]{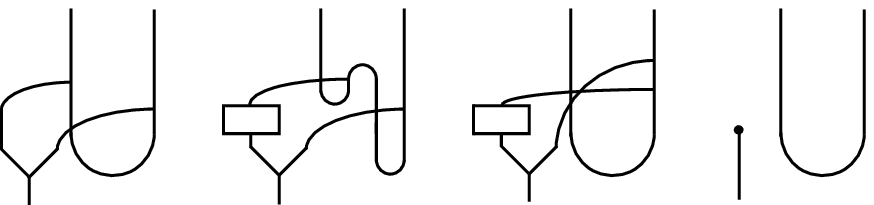}\put(4,-9){$H$}\put(18,62){$X^*$}\put(40,62){$X$}\put(66,23){$S^{-1}$}\put(24,35){$\beta_{X^*}$}\put(48,21){$\beta_{X}$}\put(200,30){$=$}\put(140,23){$S^{-1}$}\put(57,30){$=$}\put(125,30){$=$}\put(214,-9){$H$}\put(252,62){$X$}\put(228,62){$X^*$}\end{overpic}}} \quad .
  \end{gather*} \\
  The morphism $\ev_{X^*}\zzzcolon X\otimes X^*\rightarrow I$ is a morphism of Yetter--Drinfeld modules since \\
  \begin{gather*}
{\raisebox{-10mm}{\begin{overpic}[height=25mm, angle=0]{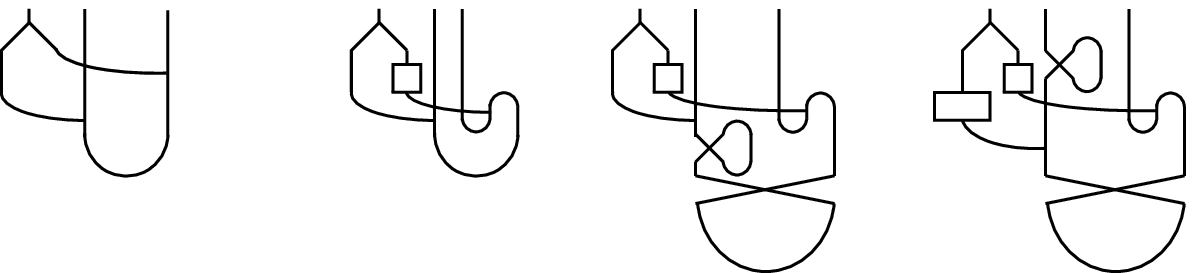}\put(4,75){$H$}\put(18,75){$X$}\put(40,75){$X^*$}\put(104,50){\tiny $S$}\put(25,40){$\alpha_{X}$}\put(48,50){$\alpha_{X^*}$}\put(70,40){$=$}\put(173,50){\tiny $S$}\put(145,40){$=$}\put(230,40){$=$}\put(250,42){\tiny $S^2$}\put(264,50){\tiny $S$} \end{overpic}}} \\
\newline \nonumber \\
{\raisebox{-10mm}{\begin{overpic}[height=25mm, angle=0]{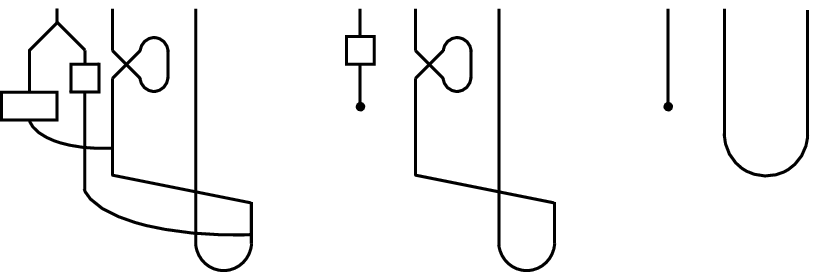}\put(-20,40){$=$}\put(4,43){\tiny $S^2$}\put(20,50){\tiny  $S$}\put(75,40){$=$}\put(93,58){\tiny $S$}\put(155,40){$=$}\put(172,75){$H$}\put(211,75){$X^*$}\put(187,75){$X$}\end{overpic}}} \quad
  \end{gather*} \\
  and similarly we have  \\
    \begin{gather*}
{\raisebox{-10mm}{\begin{overpic}[height=21mm, angle=0]{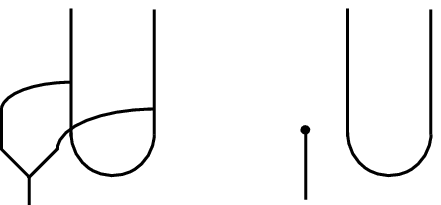}\put(4,-9){$H$}\put(18,62){$X$}\put(40,62){$X^*$}\put(24,35){$\beta_{X}$}\put(48,21){$\beta_{X^*}$}\put(70,30){$=$}\put(86,-9){$H$}\put(100,62){$X$}\put(123,62){$X^*$}\end{overpic}}} \quad .
  \end{gather*} \\
  
  By duality, it follows that $\coev_X$ and $\coev_{X^*}$ are
  morphisms of Yetter--Drinfeld modules, too.  Then we set $j(\check
  X):=((X,\alpha _X,\beta _X),(X^*,\alpha _{X^*},\beta _{X^*}),\ev_X,\coev_X,\ev_{X^*},\coev_{X^*})\in \Ob(\rYDHV)$.

  It is clear that $ij=\id_Z$.  Let us check that
  $ji=\id_{\Ob(\rYDHV)}$.  For
  $\hat{X}=((X,\alpha _X,\beta _X),(X^*,\alpha '_{X^*},\beta '_{X^*}),\evv{X})\in \Ob(\rYDHV)$,
  we have
  \begin{gather*}
    ji(\hat{X})=((X,\alpha _X,\beta _X),(X^*,\alpha _{X^*},\beta _{X^*}),\evv{X}),
  \end{gather*}
  where $\alpha _{X^*}\zzzcolon H\otimes X^*\rightarrow X^*$ and $\beta _{X^*}\zzzcolon X^*\rightarrow H\otimes X^*$ are the
  maps defined in \eqref{e29}.  We need
  to show that $\alpha '_{X^*}=\alpha _{X^*}$ and $\beta '_{X^*}=\beta _{X^*}$.  We have
\vskip1em
  \begin{gather*}
    \alpha '_{X^*}= \quad {\raisebox{-10mm}{\begin{overpic}[height=20mm, angle=0]{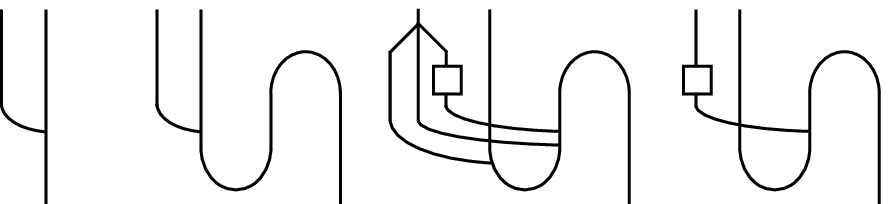}\put(-4,60){$H$}\put(10,60){$X^*$}\put(10,-9){$X^*$}\put(123,33){$S$}\put(161,23){$\alpha_{X}$}\put(180,30){$=$}\put(161,15){$\alpha_{X}$}\put(194,33){$S$}\put(15,21){$\alpha'_{X^*}$}\put(33,30){$=$}\put(100,30){$=$}\put(192,60){$H$}\put(206,60){$X^*$}\put(242,-9){$X^*$}\end{overpic}}} \quad = \alpha _{X^*}.
  \end{gather*} \\
  Similarly, we can check $\beta '_{X^*}=\beta _{X^*}$.
\end{proof}

\begin{remark}
  \label{r23}
  Here we remark the following weaker version of ribbon Yetter--Drinfeld modules.  A
  {\em pivotal Yetter--Drinfeld module} over $H$ in $\modV $ is a pivotal object in
  $\YDHV$.  The category of pivotal Yetter--Drinfeld modules over $H$ in $\modV $ is
  $\pYDHV:=\YDHVp$, the strict braided pivotal category of pivotal
  objects in $\YDHV$.  A pivotal Yetter--Drinfeld module is equivalent to a pair
  of a Yetter--Drinfeld module $(X,\alpha ,\beta )$ over $H$ in $\modV $ and a pivotal object
  structure $(X,X^*,\evv X)$ for $X$ in $\modV $ satisfying \eqref{e19} and
  \eqref{e20}.
\end{remark}

\section{Examples}

In this section, we recall the definition of the tangle category, and give some examples of ribbon Yetter--Drinfeld modules.

\subsection{The tangle category $\modT $}

Here we briefly describe the structure of the category $\modT $ of (framed, oriented)
tangles.  See \cite{Kassel} for details.

The objects in $\modT$ are the words in vertical arrows
$\downarrow ,\uparrow $.  
The monoidal unit is the empty word.  The tensor product
of objects is concatenation of words.
The morphisms $T\zzzcolon w\rightarrow w'$ between two words $w$ to $w'$ are the isotopy classes
of framed, oriented tangles in a cube such that the endpoints with orientations at the
top and bottom  are described by the words $w$ and $w'$.
Composition of two morphisms is composing tangles vertically.
Tensor product on morphisms is placing tangles side by side.
These definitions give a monoidal category structure on $\modT$.

The duality functor $(-)^*:\modT^{\op}\to\modT$ inverts the orientation of arrows and reverses the words.
The duality $d_w$, $b_w$ and the braiding $\psi_{w,w'}$ are defined as in previous sections.

It is well known that $\modT $ is a free strict ribbon category generated
by one object $\downarrow $ \cite{Shum,Turaev}.
Therefore, given a ribbon category $\modC$ and its object $X$, there is a canonical ribbon functor $F:\modT\to \modC$ which takes $\downarrow$ to $X$, which gives rise to a tangle invariant.

\subsection{The Borel subalgebra of $U_h(sl_2)$}

It is well known that for a finite-dimensional Hopf algebra $H$, the category of Yetter--Drinfeld modules is equivalent to the category of left modules over the Drinfeld double $D(H)$ of $H$ \cite{Majid}.
For general Hopf algebra $H$, which may be infinite-dimensional, the definition of the Drinfeld double $D(H)$ does not work directly.
Categories of Yetter--Drinfeld modules work also for infinite-dimensional Hopf algebras. In this subsection, we consider the case where $H=B_h$ is the Borel subalgebra of the quantized enveloping algebra $U_h(sl_2)$, which is an infinite-dimensional complete Hopf algebra over $\mathbb Q[[h]]$.

\subsubsection{The quantized enveloping algebra $U_h(sl_2)$ and its Borel subalgebra}
For the definitions and properties of quantized enveloping algebras, the reader is referred to \cite{Kassel}.

The quantized enveloping algebra $U_h=U_h(sl_2)$ for $sl_2$ is the complete $\mathbb Q[[h]]$-algebra generated by $H,E,F$ with relations
\begin{gather*}
    [H,E]=2E,\quad [H,F]=-2F,\quad
    [E,F]=\frac{K-K^{-1}}{v-v^{-1}},
\end{gather*}
where we set $v=\exp\frac h2$ and $K=v^H=\exp\frac{hH}2$.
(Note that we are working with the symmetric monoidal category of topologically free $\mathbb Q[[h]]$-module.)
It has a complete Hopf algebra structure (with invertible antipode) given by
\begin{gather*}
    \Delta(H)=H\otimes 1+1\otimes H,\quad
    \Delta(E)=E\otimes 1+K\otimes E,\quad
    \Delta(F)=F\otimes K^{-1}+1\otimes F,\quad
    \\
    \epsilon(H)=\epsilon(E)=\epsilon(F)=0,
    \\
    S(H)=-H,\quad S(E)=-K^{-1}E,\quad S(F)=-FK.
\end{gather*}

The Hopf algebra $U_h$ is a complete ribbon Hopf algebra with a universal $R$-matrix
\begin{gather*}
    R= D\left(\sum_{n\ge0}v^{n(n-1)/2}\frac{(v-v^{-1})^n}{[n]!}F^n\otimes E^n\right)
    =D(1\ot1+ (v-v^{-1})F\ot E + \cdots ),
\end{gather*}
where $D=\exp(\frac{h}{4}H\otimes H)\in U_h^{\ho2}$,
and a ribbon element $$\mathbf{r}=\sum S(R')K^{-1}R''=\sum R''KR',$$
where we write $R=\sum R'\otimes R''$.
The associated grouplike element $\kappa\in U_h$ is
\begin{gather*}
    \kappa:=\left(\sum S(R'')R'\right)\mathbf{r}^{-1} = K^{-1}.
\end{gather*}

The Borel subalgebra $B_h$ of $U_h$ is the closed
$\mathbb{Q}[[h]]$-subalgebra of $U_h$ generated by $H$ and $E$.
The algebra $B_h$ is a complete Hopf subalgebra of $U_h$.

\subsubsection{Yetter--Drinfeld modules corresponding to finite-dimensional irreducible $U_h$-modules}
The category $\YD_{B_h}^{fd}$ of finite-dimensional Yetter--Drinfeld modules over $B_h$ is a rigid braided category.
We define a pivotal structure on it by setting
\begin{gather*}
d_{V^*}:V\otimes V^*\to \Qh,\quad d_{V^*}(x\otimes f) = f(\kappa x),\\
b_{V^*}:\Qh\to V^*\otimes V,\quad b_{V^*}(1) = \sum_{i=1}^2 \bfv^i\otimes\kappa^{-1}\bfv_i.
\end{gather*}

It is well known that for each integer $n\ge1$ there is exactly one  ``$n$-dimensional'' $U_h$-module $V_n$ up to isomorphism.  Here ``$n$-dimensional'' means ``free of rank $n$ over $\Qh$''.
Each $V_n$ has a ribbon Yetter--Drinfeld module structure.
For simplicity, we will just look at the $2$-dimensional $U_h$-module $V=V_2=\Qh \bfv_0\oplus \Qh \bfv_1$
defined by
\begin{gather*}
    H\bfv_0= \bfv_0,\quad E\bfv_0=0,\quad F\bfv_0=\bfv_1,\quad
    H\bfv_1= -\bfv_1,\quad E\bfv_1=\bfv_0,\quad F\bfv_1=0.
\end{gather*}
Note that the quantum link invariants associated to the $U_h$-module $V$ given by the Reshetikhin--Turaev construction \cite{RT} is the Jones polynomial.

The left $U_h$-action on $V$ gives a left action $\alpha:B_h\ot V\to V$ of $B_h$ on $V$.
The left coaction $\beta:V\to B_h\otimes V$ is given by
\begin{gather*}
    \beta(v)= \sum R'' \otimes R' v
\end{gather*}
for $v\in V$, where we write $R=\sum R'\otimes R''$.
One computes easily that
\begin{gather*}
    \beta(\bfv_0)= K^{1/2}\otimes \bfv_0 + (v-v^{-1})K^{-1/2}E\ot\bfv_1,\quad
    \beta(\bfv_1)= K^{-1/2}\ot \bfv_1.
\end{gather*}
Then $(V,\alpha,\beta)$ is a Yetter--Drinfeld module over $B_h$,
which determines the Yetter--Drinfeld module structure on $V^*$.
Then one can check that $V$ has a ribbon Yetter--Drinfeld module structure such that the linear isomorphism $\gamma_V: V\to V$ is given by
$\gamma_V(\bfv_i)=\kappa \bfv_i=v^{1-2i}\bfv$.

Note that the braiding $\psi_{V,V}:V\otimes V\to V\otimes V$ constructed from the ribbon Yetter--Drinfeld module structure of $V$ coincides with the braiding given by the universal $R$-matrix.
One can check that the framed link invariant associated to the ribbon Yetter--Drinfeld module $V$ over $B_h$ coincides with the Jones polynomial.

\subsection{Group algebras}

Let $k$ be a field and let $G$ be a finite group.
Let $k[G]$ denote the group Hopf algebra of $G$ over $k$.
The comultiplication, counit and antipode of $k[G]$ are given by
\begin{gather*}
\Delta(g)=g\ot g,\quad
\epsilon(g)=1,\quad
S(g)=g^{-1}
\end{gather*}
for $g\in G$.

We give $k[G]$ a Yetter--Drinfeld module structure over $k[G]$ as follows.
\begin{gather*}
\alpha=\ad:k[G]\ot k[G]\to k[G],\quad \ad(g\ot h) = {}^gh=ghg^{-1},\\
\beta=\Delta: k[G]\to k[G]\ot k[G],\quad \Delta(g) = g\ot g.
\end{gather*}
Thus, the action of $k[G]$ on $k[G]$ is the left adjoint action, and the coaction of $k[G]$ on $k[G]$ is the comultiplication.
Then one can easily check that $(k[G],\ad,\Delta)$ is a Yetter--Drinfeld module over $k[G]$.

Let $S\subset G$ be an $\ad$-invariant subset of $G$, which is a union of conjugacy classes of $G$.
Then the subspace $k[S]=\bigoplus_{s\in S}ks\subset k[G]$ is a Yetter--Drinfeld submodule of $k[G]$.

The Yetter--Drinfeld module $k[S]$ and its dual $k[S]^*=\Hom_k(k[S],k)$ have a ribbon Yetter--Drinfeld module structure as follows.
\begin{gather*}
d_{k[S]}:k[S]^*\otimes k[S]\to k,\quad d_{k[S]}(f\otimes x) = f(x),\\
b_{k[S]}:k\to k[S]\otimes k[S]^*,\quad b_V(1) = \sum_{g\in S} g\otimes g^*,\\
d_{k[S]^*}:k[S]\otimes k[S]^*\to k,\quad d_{k[S]^*}(x\otimes f) = f(x),\\
b_{k[S]^*}:k\to k[S]^*\otimes k[S],\quad b_{k[S]^*}(1) = \sum_{g\in S} g^*\otimes g,
\end{gather*}
where $g^*\in k[S]^*$ is defined by $g^*(h)=\delta_{g,h}$ for $h\in S$.
Then one can check that these maps give a ribbon Yetter--Drinfeld module structure.

One the link invariant corresponding to the ribbon Yetter--Drinfeld module $k[S]$ sends a link $L$ to the number of homomorphisms $\pi_1(\R^3\setminus L)\to G$ such that the meridian of each component of $L$ is sent to an element of $S$.
This invariant is considered by Freyd and Yetter \cite{FY}.


\begin{thebibliography}{99}
\bibitem{AG} N. Andruskiewitsch and M. Gra\~{n}a, {\it Braided  Hopf algebras over non-abelian groups}, Bol. Acad. Ciencias (C\'{o}rdaba) 63 (1999), 45--78.
\bibitem{Drinfeld1} V. G. Drinfeld, {\it Quantum groups}, J. Soviet Math. \textbf{41} (1988) no. 2, 898--915. 
\bibitem{FY} P. J. Freyd and D.N. Yetter, {\it Braided compact closed categories with application with low  dimensional topology}, Adv. Math. 77 (1989), 156--182. 
\bibitem{Kassel} C. Kassel, {\it Quantum groups}, Graduate Texts in Mathematics 115, Springer-Verlag, New York, (1995).
\bibitem{MacLane} S. Mac Lane, {\it Categories for the working mathematician}, Graduate Texts in Mathematics 5, Springer-Verlag, New York, (1998).
\bibitem{Majid} S. Majid, {\it Doubles of quasitriangular Hopf algebras}, Comm. Algebra, 19 (1991), 3061--3073.
\bibitem{RT} N. Y. Reshetikhin. and V. G. Turaev, {\it Invariants of 3-manifolds via link polynomials and quantum groups}, Invent. Math. 103 (1991) 547--597. 
\bibitem{Selinger} P. Selinger, {\it A survey of graphical languages for monoidal categories}, New structures for physics, 289--355, Lecture Notes in Phys., 813, Springer, Heidelberg, 2011.
\bibitem{Shimizu} K. Shimizu, {\it The pivotal cover and Frobenius-Schur indicators}, J. Algebra 428, 357--402 (2015). 
\bibitem{Shum} M. C. Shum, {\it Tortile tensor categories}, J. Pure Appl. Algebra 93 (1994), 57--110.
\bibitem{Turaev} V. G. Turaev, {\it Quantum invariants of knots and 3-manifolds}, volume 18 of de Gruyter Studies in Mathematics. Walter de Gruyter \& Co., Berlin, 1994. 
\bibitem{Yetter} D. N. Yetter, {\it Quantum groups and representations of monoidal categories}, Math. Proc. Cambridge Philos. Soc. 108 (1990), 261--290. 
\end{thebibliography}
\end{document}